\setlist[enumerate,1]{label={\upshape(\roman*)}}
\newtheorem{theorem}{Theorem}[section]
\newtheorem{proposition}[theorem]{Proposition}
\newtheorem{lemma}[theorem]{Lemma}
\newtheorem{corollary}[theorem]{Corollary}
\theoremstyle{definition}
\newtheorem{definition}[theorem]{Definition}
\newtheorem{example}[theorem]{Example}
\newtheorem{remark}[theorem]{Remark}
\newtheorem{question}[theorem]{Question}
\DeclareMathOperator{\tr}{tr}
\DeclareMathOperator{\rk}{rank}
\DeclareMathOperator{\order}{order}
\newcommand{\bI}{\mathbf{I}}
\newcommand{\bJ}{\mathbf{J}}
\newcommand{\bj}{\mathbf{j}}
\newcommand{\bu}{\mathbf{u}}
\newcommand{\bv}{\mathbf{v}}
\newcommand{\br}{\mathbf{r}}
\newcommand{\be}{\mathbf{e}}
\newcommand{\sA}{\mathsf{A}}
\newcommand{\sD}{\mathsf{D}}
\newcommand{\sE}{\mathsf{E}}
\newcommand{\sL}{\mathsf{L}}
\newcommand{\sM}{\mathsf{M}}
\newcommand{\Z}{\mathbb{Z}}
\begin{document}
\title{\textbf{Maximality of Seidel matrices and switching roots of graphs}}
\author[a]{Meng-Yue Cao}
\author[b,c,]{Jack H. Koolen\footnote{Corresponding author.}}
\author[d]{Akihiro Munemasa}
\author[d]{Kiyoto Yoshino}
\affil[a]{\footnotesize{School of Mathematical Sciences, Beijing Normal University, 19 Xinjiekouwai Street, Beijing, 100875, PR China.}}
\affil[b]{\footnotesize{School of Mathematical Sciences, University of Science and Technology of China, 96 Jinzhai Road, Hefei, 230026, Anhui, PR China.}}
\affil[c]{\footnotesize{Wen-Tsun Wu Key Laboratory of CAS, 96 Jinzhai Road, Hefei, 230026, Anhui, PR China}}
\affil[d]{\footnotesize{Graduate School of Information Sciences, Tohoku University, 6-3-09 Aramaki-Aza-Aoba, Aoba-ku, Sendai, 980-8579, Japan}}
\date{}
\maketitle
\newcommand\blfootnote[1]{%
\begingroup
\renewcommand\thefootnote{}\footnote{#1}%
\addtocounter{footnote}{-1}%
\endgroup}
\blfootnote{2010 Mathematics Subject Classification. Primary 05C50, secondary 05C22.}
\blfootnote{E-mail addresses: cmy1325@163.com (M.-Y. Cao), koolen@ustc.edu.cn (J.H. Koolen), munemasa@math.is.tohoku.ac.jp (A. Munemasa), kiyoto.yosino.r2@dc.tohoku.ac.jp (K. Yoshino).}

\renewcommand\abstractname{Abstract}
\begin{abstract}
In this paper, we discuss maximality of Seidel matrices with a fixed largest eigenvalue.
We present a classification of maximal Seidel matrices of largest eigenvalue 3, which gives a classification of maximal equiangular lines in a Euclidean space with angle $\arccos1/3$.
Motivated by the maximality of the exceptional root system $E_8$, we define strong maximality of a Seidel matrix, and show that every Seidel matrix achieving the absolute bound is strongly maximal.

\emph{Key words}:  Seidel matrices, adjacency matrices, switching classes of graphs, two-graphs. \end{abstract}

\section{Introduction}\label{sec:intro}
Throughout this paper, we consider only simple undirected graphs without loops.
For terminology which we do not define see~\cite{brouwer2011spectra,godsil2013}.
The Seidel matrix $S=S(G)$ of a graph $G$ is defined to be $S:=\mathbf{J-I}-2A$,
where $A:=A(G)$ is the adjacency matrix of $G$.
Alternatively, a Seidel matrix is a symmetric matrix with zero diagonal and all off-diagonal entries $\pm1$.

Seidel matrices are introduced in connection with equiangular lines in Euclidean spaces.
If $S$ is a Seidel matrix of a graph $G$, then we establish a close connection between these geometric objects by the use of the ``switching root'', which we introduce in the present paper.
Specifically, we consider the properties of Seidel matrices defined in the following.
Note that $\rk(W)$ denotes the rank of a matrix $W$.

\begin{definition}	\label{dfn:max}
Let $S$ be a Seidel matrix with largest eigenvalue $\lambda$.
We say that $S$ is \emph{maximal}
if there is no Seidel matrix $S'$ satisfying the following conditions~\ref{dfn:max:0}, \ref{dfn:max:1} and~\ref{dfn:max:2}.
We say that $S$ is \emph{strongly maximal}
if there is no Seidel matrix $S'$ satisfying~\ref{dfn:max:0} and~\ref{dfn:max:1}.
\begin{enumerate}[(1)]
	\item The largest eigenvalue of $S'$ equals $\lambda$.	\label{dfn:max:0}
	\item The Seidel matrix $S'$ contains $S$ as a proper principal submatrix.	\label{dfn:max:1}
	\item $\rk(\lambda\bI-S') = \rk(\lambda\bI-S)$.	\label{dfn:max:2}
\end{enumerate}
If $S$ is not strongly maximal, then we call $S$ \emph{extendable}.

We say that a graph $G$ is \emph{maximal}, \emph{strongly maximal} and \emph{extendable},
if $S(G)$ is maximal, strongly maximal and extendable, respectively.
\end{definition}

A set of lines in a Euclidean space is \emph{equiangular} if any pair of lines forms the same angle.
The \emph{rank} of a set of equiangular lines is the smallest dimension of Euclidean spaces into which these lines are isometrically embedded.
Denote by $N_\alpha(d)$ the maximum cardinality of a set of equiangular lines with angle $\arccos(\alpha)$ in dimension $d$,
and denote by $N^*_\alpha(r)$ that with angle $\arccos(\alpha)$ of rank $r$.
Then we have $N_\alpha(d) = \max_{r \leq d} N^*_\alpha(r)$.
Note that, if a Seidel matrix $S$ has largest eigenvalue $\lambda$, then there exist vectors whose Gram matrix equals $\lambda\bI-S$.
In this case, such vectors span equiangular lines with common angle $\arccos(1/\lambda)$, and the rank of $\lambda\bI-S$ equals that of these lines.
Note that $S$ is maximal if and only if the set of equiangular lines so obtained is saturated in the sense of \cite{Lin2020, Lin2020b}.
For example, $S := \bJ_4 - \bI_4$ is a Seidel matrix having largest eigenvalue $\lambda = 3$, and induces the set of  equiangular lines $\mathbb{R} \bu_1$, $\mathbb{R} \bu_2$, $\mathbb{R} \bu_3$ and $\mathbb{R} \bu_4$ with common angle $\arccos(1/3)$,
where
\begin{align*}
	&\bu_1 := (1,1,1,0,0,0)^\top/\sqrt{3},	\quad
	&&\bu_2 := (-1,0,0,1,1,0)^\top/\sqrt{3},	\quad \\
	&\bu_3 := (0,-1,0,-1,0,1)^\top/\sqrt{3},	\quad
	&&\bu_4 := (0,0,-1,0,-1,-1)^\top/\sqrt{3}.
\end{align*}
Namely, $|(\bu_i,\bu_j)| = 1/\lambda = 1/3$ holds for $i \neq j$.
Since $\bu_1$, $\bu_2$, $\bu_3$ and $\bu_4$ generate a $3$-dimensional $\mathbb{R}$-vector space by $\bu_1+\bu_2+\bu_3+\bu_4=0$, we have $N^*_{1/3}(3) \geq 4$.
In fact equality holds by Corollary~\ref{cor:1/3}, and hence this Seidel matrix $S$ is maximal.
Note that Lin and Yu~\cite{Lin2020b} provided several saturated sets of equiangular lines, or equivalently maximal Seidel matrices.

Lemmens and Seidel determined $N_{1/3}(d)$ for every positive integer $d$ in~\cite[Theorem~4.5]{lemmens1973}.
In particular, it asserts that $N_{1/3}(7) = \cdots = N_{1/3}(14) = 28$.
By~\cite[Theorem~4]{glazyrin2018}, every set of equiangular lines in $\mathbb{R}^n$ $(n \leq 11)$ of cardinality $28$ with common angle $\arccos(1/3)$ is contained in a $7$-dimensional subspace.
Namely, $N^*_{1/3}(n) < 28 = N^*_{1/3}(7)$ holds for every $n \in \{8,\ldots,11\}$.
Moreover, it has been proved by Lin and Yu that $N^*_{1/3}(8) = 14$~\cite[Proposition~5.2]{Lin2020} and the set of equiangular lines of rank $8$ and cardinality $14$ with angle $\arccos(1/3)$ is unique~\cite[Remark on p.\ 14]{Lin2020}.
In Section~\ref{sec:3}, we present Theorem~\ref{thm:3} as the first main result, which determines maximal and strongly maximal graphs with largest Seidel eigenvalue $3$.
This immediately implies a more precise and general result as Corollary~\ref{cor:1/3},
which determines the sets of equiangular lines with angle $\arccos(1/3)$ of a given rank $r$ and cardinality $N^*_{1/3}(r)$.

Let $S$ be a Seidel matrix of order $n$ with largest eigenvalue $\lambda$, and let $r = \rk(\lambda \bI - S)$.
It is known that the absolute bound $n \leq r(r+1)/2$ can be achieved if $r \in \{2,3,7,23\}$.
Moreover, a Seidel matrix which attains this bound is unique up to switching for each rank $r \in \{2,3,7,23\}$
(see~Theorem~\ref{thm:3} for $r=7$ and \cite[Theorem~A]{goethals1975} for $r=23$).
The second main result is Theorem~\ref{regular2}, which shows that a Seidel matrix attaining the absolute bound is strongly maximal.
In addition, it follows from Theorem~\ref{thm:3} that a strongly maximal graph with largest Seidel eigenvalue $\lambda = 3$, which attains the absolute bound for $r=7$, is unique up to switching.
An analogue is verified for each $(\lambda,r) \in \{(2,2),(\sqrt{5},3)\}$ in Proposition~\ref{prop:2,5}.
Hence we suspect that  the disjoint union of the McLaughlin graph and $K_1$, which attains the absolute bound for $r=23$, is a unique strongly maximal graph with largest Seidel eigenvalue $5$ up to switching.

This paper is organized as follows.
In Section~\ref{sec:sw}, we introduce the ``switching root'' and provide a theorem that shows a relationship between the eigenvalues of graphs and those of Seidel matrices.
In Section~\ref{sec:3}, we classify the maximal Seidel matrices with largest eigenvalue $3$.
In Section~\ref{sec:rank}, we prepare for the next section.
In Section~\ref{sec:strong_max}, we prove that a graph which attains the absolute bound is strongly maximal, and discuss their uniqueness.
In Section~\ref{sec:ex}, we discuss the existence of strongly maximal graphs whose largest Seidel eigenvalue is less than $3$, and also provide two families of infinitely many strongly maximal graphs with unbounded largest Seidel eigenvalue.
\section{Switching root}	\label{sec:sw}
Let $G=(V,E)$ be a graph.
For a subset $U$ of $V$, the graph $G^U=(V,E^U)$ is the graph obtained as follows:
\begin{equation}\nonumber
x\sim y \ \text{in} \ G^U \ \text{if}\ \left\{
 \begin{array}{ll}
x\sim y \text{ in $G$ and }x,y\in U ,\\
x\sim y \text{ in $G$ and }x,y\in V\setminus U ,\\
x\not\sim y \text{ in $G$ and }x\in U,\ y\in V\setminus U.
 \end{array}
 \right.
\end{equation}
We say that $G^U$ is the graph obtained from $G$ by switching with respect to $U$.
Note $G^U=G^{V\setminus U}$. Note further that the spectrum of $S(G^U)$ is equal to the spectrum of $S(G)$ for all $U\subseteq V$, as they are similar.
The graphs $G$ and $G^U$ are called switching equivalent.
Switching equivalence is an equivalence relation, since
$(G^U)^W = G^{U \Delta W}$ where $\Delta$ denotes symmetric difference.
The equivalence class $[G]$ of $G$, called the switching class of $G$,
is the set $\{G^U \mid U \subseteq V\}$.
\begin{definition}\label{dfn:swroot}
Let $G=(V,E)$ be a graph having largest Seidel eigenvalue $2\theta -1$, where $\theta$ is a positive real number.
Let $\{\mathbf{\alpha}^{(x)}\mid x\in V\}$ be the set of vectors in $\mathbb{R}^m$ for some positive integer $m$ such that the inner product $(\mathbf{\alpha}^{(x)},\mathbf{\alpha}^{(y)})$ satisfies
\begin{equation}\label{eq:Gram}
(\mathbf{\alpha}^{(x)},\mathbf{\alpha}^{(y)})=(A(G)+\theta \mathbf{I})_{xy}\quad(x,y\in V).
\end{equation}
A vector
$\mathbf{r}$ is called a {\em switching root} of $G$ if
\begin{enumerate}[(1)]
\item $(\mathbf{r},\mathbf{r})=2$ and
\item $(\mathbf{r}, \mathbf{\alpha}^{(x)})=1$ for all vertices $x$ of $G$.
\end{enumerate}
\end{definition}
One could consider a configuration of vectors $\alpha^{(x)}$ ($x\in V$) and a switching root $\mathbf{r}$ for an arbitrary positive real number $\theta$, in Definition~\ref{dfn:swroot}.
The existence of such a configuration is equivalent to the condition that the matrix $B_\theta(G)$ defined in Definition~\ref{dfn:B} below is positive semidefinite. The following theorem justifies that the choice of $\theta$ in Definition~\ref{dfn:swroot} is the optimal one.

The reason for the name ``switching root'' is the following.
Let $U\subseteq V(G)$ and let $G^U$ be the graph obtained from $G$ by switching with respect to $U$. Consider the vectors $\mathbf{\beta}^{(x)}$ defined as follows:
$ \mathbf{\beta}^{(x)}:= \mathbf{\alpha}^{(x)}$ if $x \in V(G) \setminus U$ and $ \mathbf{\beta}^{(x)}:= \mathbf{r} - \mathbf{\alpha}^{(x)}$ if $x \in U$. Then, we have $(\mathbf{\beta}^{(x)},\mathbf{\beta}^{(y)})=(A(G^U)+\theta \mathbf{I})_{xy}$ for $x,y\in V$.

\begin{definition}\label{dfn:B}
Let $\theta$ be a positive real number and let $G$ be a graph.
For any real number $t$, we define the matrix $B^{(t)}_{\theta}(G)$ as
\[
B^{(t)}_{\theta}(G):=
        \left(
        \begin{array}{cc}
        A(G)+\theta\mathbf{I} & \mathbf{j}\\
        \mathbf{j}^T & t \\
        \end{array}
        \right),
\]
where $\mathbf{j}$ denotes the all-ones vector.
In particular, we set $B_{\theta}(G) := B^{(2)}_{\theta}(G)$.
Moreover we assume that $-\theta$ is at least the smallest eigenvalue of $A(G)$, and then define
\[p(G) := \min\{ t \in \mathbb{R} \mid
B^{(t)}_{\theta}(G)\text{ is positive semi-definite}\}.\]
\end{definition}

Note that for a graph $G$ having at least one vertex, the value $p(G)$ is positive.
For every $t \neq 0$, we have
\begin{align}	\label{eq:block diag B^t}
	\begin{pmatrix}
	\mathbf{I} & -\frac{1}{t}\mathbf{j}\\
	0 & 1
	\end{pmatrix}
	B^{(t)}_\theta(G)
	\begin{pmatrix}
	\mathbf{I} & 0\\
	-\frac{1}{t}\mathbf{j}^\top & 1
	\end{pmatrix}
	=
	\begin{pmatrix}
	A(G)+\theta\mathbf{I}-\frac{1}{t} \mathbf{J} & 0\\
	0 & t
	\end{pmatrix}.
\end{align}
For $t=2$, this together with $2(A(G)+\theta\mathbf{I})-\mathbf{J} = (2\theta-1)\mathbf{I}-S(G)$ implies the following theorem.
\begin{theorem}\label{main}
	Let $\theta$ be a positive real number and $G$ be a graph.
	Then the following two statements are equivalent:
	\begin{enumerate}[(1)]
		\item $S(G)$ has largest eigenvalue at most $2 \theta-1$;	\label{main:1}
		\item $B_\theta(G)$ is positive semi-definite.	\label{main:2}
	\end{enumerate}
	If one of the equivalent conditions~\ref{main:1} and \ref{main:2} holds,
	then $\rk((2\theta-1)\bI - S(G)) + 1 = \rk(B_\theta(G))$, $A(G)$ has least eigenvalue at least $-\theta$, and $p(G) \leq 2$  holds.
\end{theorem}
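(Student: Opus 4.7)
The plan is to read off everything directly from the congruence identity in equation~\eqref{eq:block diag B^t} with $t=2$. Specializing that identity and using $2(A(G)+\theta\mathbf{I}) - \mathbf{J} = (2\theta-1)\mathbf{I} - S(G)$, I obtain
\[
P^\top\, B_\theta(G)\, P \;=\; \begin{pmatrix} \tfrac{1}{2}\bigl((2\theta-1)\mathbf{I}-S(G)\bigr) & 0 \\ 0 & 2 \end{pmatrix}, \qquad P := \begin{pmatrix} \mathbf{I} & 0 \\ -\tfrac{1}{2}\mathbf{j}^\top & 1 \end{pmatrix},
\]
where $P$ is (lower) unitriangular, hence invertible. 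Since congruence by an invertible matrix preserves both positive semidefiniteness and rank, this single identity is the engine of the whole theorem.

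For the equivalence of \ref{main:1} and \ref{main:2}: the block-diagonal right-hand side is positive semidefinite if and only if its top-left block is, which in turn happens if and only if $(2\theta-1)\mathbf{I} - S(G)$ is positive semidefinite, i.e. the largest eigenvalue of $S(G)$ is at most $2\theta-1$. For the rank formula I take ranks of both sides: the right-hand side has rank $\rk((2\theta-1)\mathbf{I}-S(G)) + 1$ (the scalar $2$ in the bottom right contributes $1$, and scaling the top-left block by $1/2$ does not change its rank), while the left-hand side has rank $\rk(B_\theta(G))$.

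For the two remaining assertions I argue as follows. The matrix $A(G)+\theta\mathbf{I}$ is the top-left principal submatrix of $B_\theta(G)$; if $B_\theta(G)$ is positive semidefinite, so is every principal submatrix, hence $A(G)$ has least eigenvalue at least $-\theta$. Finally, $p(G) \le 2$ is immediate from the definition of $p(G)$ as the infimum of $t$ for which $B^{(t)}_\theta(G)$ is positive semidefinite, together with the fact that $B_\theta(G) = B^{(2)}_\theta(G)$ is positive semidefinite under either equivalent condition.

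I do not anticipate a genuine obstacle: the paper has already handed us the key congruence, and the proof amounts to a clean bookkeeping step plus the standard observation that a principal submatrix of a PSD matrix is PSD. The only care needed is to note that $P$ is invertible (so that congruence preserves both PSD-ness and rank) and that multiplying the top-left block by the positive scalar $1/2$ affects neither property.
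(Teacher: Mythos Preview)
Your proof is correct and follows exactly the approach the paper takes: specialize the congruence~\eqref{eq:block diag B^t} to $t=2$, rewrite the top-left block as $\tfrac{1}{2}((2\theta-1)\mathbf{I}-S(G))$, and read off the equivalence, the rank identity, and the two supplementary claims. You have simply written out explicitly the details that the paper leaves implicit in its one-line justification.
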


The \emph{cone} over a graph $G$, denoted by $\tilde{G}$, is defined to be the graph obtained by adding a new vertex to $G$ and connecting it to all the vertices of $G$.
\begin{corollary}	\label{cor:main}
	For every graph $G$ of order $n$, the following are equivalent:
	\begin{enumerate}[(1)]
		\item The graph $G$ has largest Seidel eigenvalue (resp.\ at most) $3$.	\label{cor:main:1}	
		\item The cone $\tilde{G}$ over $G$ has smallest eigenvalue (resp.\ at least) $-2$. 	\label{cor:main:2}
	\end{enumerate}
	If $S(G)$ has largest Seidel eigenvalue at most $3$, then $\rk(3\mathbf{I}-S(G))+1 = \rk(A(\tilde{G})+2\bI)$.
\end{corollary}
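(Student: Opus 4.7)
The plan is to apply Theorem~\ref{main} with the specific value $\theta = 2$, so that $2\theta-1 = 3$, and to recognize that the matrix $B_\theta(G)$ for this choice is essentially $A(\tilde{G}) + 2\bI$.

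First I would write out the adjacency matrix of the cone as
\[
A(\tilde{G}) = \begin{pmatrix} A(G) & \bj \\ \bj^\top & 0 \end{pmatrix},
\]
and observe that $A(\tilde{G}) + 2\bI_{n+1}$ coincides exactly with
\[
B_{2}(G) = \begin{pmatrix} A(G)+2\bI_n & \bj \\ \bj^\top & 2 \end{pmatrix}.
\]
This is the identification that drives the whole corollary.

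Next, I would feed this identity into Theorem~\ref{main} with $\theta = 2$. By that theorem, $S(G)$ has largest eigenvalue at most $3$ if and only if $B_{2}(G)$ is positive semi-definite; but positive semi-definiteness of $A(\tilde{G}) + 2\bI$ is the same as $\tilde{G}$ having smallest eigenvalue at least $-2$. That gives the ``at most / at least'' form of the equivalence. The rank identity in Theorem~\ref{main} then reads $\rk(3\bI - S(G)) + 1 = \rk(B_2(G)) = \rk(A(\tilde{G}) + 2\bI)$, which is exactly the final assertion.

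Finally, I would upgrade ``at most $3$'' / ``at least $-2$'' to the equality version (the ``resp.'' clause). Assuming the equivalent at-most/at-least conditions, the largest eigenvalue of $S(G)$ equals $3$ iff $3\bI - S(G)$ is singular, iff $\rk(3\bI - S(G)) < n$; by the rank identity this is equivalent to $\rk(A(\tilde{G}) + 2\bI) < n+1$, i.e.\ to $-2$ being an eigenvalue of $A(\tilde{G})$, which combined with the lower bound $-2$ on the spectrum of $A(\tilde{G})$ is exactly the statement that the smallest eigenvalue of $\tilde{G}$ equals $-2$. There is no real obstacle here; the only point requiring care is keeping the two directions of the ``resp.'' case in sync via the rank formula rather than re-proving an equivalence from scratch.
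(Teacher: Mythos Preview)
Your proposal is correct and follows exactly the route the paper intends: the corollary is stated immediately after Theorem~\ref{main} with no separate proof, and the implicit argument is precisely to set $\theta=2$ and observe that $B_2(G)=A(\tilde{G})+2\bI$. Your extra care in deriving the equality case from the rank identity is fine and makes explicit what the paper leaves to the reader.
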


\section{Classification of maximal Seidel matrices with largest eigenvalue~$3$}	\label{sec:3}
We prove the following theorem at the end of this section, which gives some maximal graphs (up to switching) with largest Seidel eigenvalue $3$ and also a strongly maximal one.
Note that we denote by $G+H$ the disjoint union of two graphs $G$ and $H$.

\begin{theorem}	\label{thm:3}
	Let $G$ be a graph of order $n$ having largest Seidel eigenvalue $3$ with multiplicity $m$.
	Assume that $G$ is maximal.
	Then it is switching equivalent to one of the following.
	\begin{enumerate}[(1)]
		\item $L(K_{5})$ and $L(K_{2,4})$ if $n-m =5$.
		\item $L(K_{6})+K_1$ and $L(K_{2,5})$ if $n-m =6$.		
		\item $L(K_8)$ if $n-m=7$.				
		\item $L(K_{2,n-m-1})$ if $n-m = 3,4$ or $n-m \geq 8$.
	\end{enumerate}		
Furthermore, if $G$ is strongly maximal, then it is switching equivalent to $L(K_8)$.
\end{theorem}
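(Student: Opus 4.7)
The plan is to translate the problem into one about the cone $\tilde{G}$ via Corollary~\ref{cor:main}. Set $r := n-m$. The corollary yields $\rk(A(\tilde{G})+2\bI) = r+1$ and that $\tilde{G}$ has smallest eigenvalue $-2$. Equivalently, using the switching root viewpoint of Section~\ref{sec:sw} with $\theta=2$, there exist vectors $\{\mathbf{\alpha}^{(x)} : x \in V(G)\}$ together with a switching root $\br$, all of squared norm $2$, spanning a real subspace of dimension $r+1$, such that $(\mathbf{\alpha}^{(x)}, \mathbf{\alpha}^{(y)}) = A(G)_{xy}$ for $x \neq y$ and $(\mathbf{\alpha}^{(x)}, \br) = 1$ for every $x$. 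Maximality of $G$, combined with the rank constraint in Definition~\ref{dfn:max}, translates (after suitable switching) into the statement that no further vector $\beta$ of squared norm $2$ in the same $(r+1)$-dimensional subspace satisfies $(\beta,\br) = 1$ and $(\beta, \mathbf{\alpha}^{(x)}) \in \{0,1\}$ for all $x$.

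The integer lattice generated by $\{\mathbf{\alpha}^{(x)}\}\cup\{\br\}$ is an integral root lattice of rank $r+1$ whose norm-$2$ vectors form a simply-laced root system, so by the classical Cartan--Killing classification it decomposes as an orthogonal direct sum of irreducible components of type $A_k$, $D_k$, or $E_k$ $(k\in\{6,7,8\})$. I would carry out the case analysis by this type. When the lattice is of type $D_{r+1}$, taking $\br = \mathbf{e}_1 + \mathbf{e}_2$ and enumerating the roots $\pm\mathbf{e}_i\pm\mathbf{e}_j$ having inner product $1$ with $\br$ yields, after switching, that the maximal compatible configuration is switching equivalent to $L(K_{2,r-1})$. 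For the exceptional types, one enumerates the roots of $E_k$ at inner product $1$ to a fixed $\br$ and identifies the maximum subsets with pairwise inner products in $\{0,1\}$: this gives $L(K_5)$ inside $E_6$, $L(K_6)+K_1$ inside $E_7$, and $L(K_8)$ inside $E_8$, the last exhausting all $28$ switching pairs of $E_8$-roots at inner product $1$ with $\br$ and attaining the absolute bound $28 = 7\cdot 8/2$. Summands of type $A_k$ contribute no additional maxima. Comparing the candidates at each $r$ yields the stated list: at $r\in\{5,6\}$ the $D$- and $E$-type maxima are mutually incomparable and both maximal; at $r = 7$ the $D_8$-candidate $L(K_{2,6})$ is realized as a proper induced subgraph of $L(K_8)$ (the twelve edges of $K_8$ incident to two fixed vertices but omitting the edge between them) and is therefore not maximal; for $r \leq 4$ or $r \geq 8$ no exceptional type is available and only $L(K_{2,r-1})$ survives.

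For the strong-maximality assertion, Theorem~\ref{main} permits an extension preserving the largest Seidel eigenvalue $3$ to increase the ambient rank, so extension vectors lie in some (possibly larger) integral root lattice containing $\{\mathbf{\alpha}^{(x)}\}\cup\{\br\}$. For every candidate other than $L(K_8)$ one constructs such an extension explicitly: $L(K_{2,r-1})$ embeds in $D_{r+1}\subset D_{r+2}$ and admits the additional root $\mathbf{e}_1 + \mathbf{e}_{r+2}$; the sporadic examples at $r = 5, 6$ embed into $E_7$ or $E_8$ so that further compatible roots can be adjoined. Only $L(K_8)$ already uses all $28$ compatible roots of $E_8$, and any integral root lattice properly containing $E_8$ splits orthogonally as $E_8\oplus\Lambda'$ (since $E_8$ is even unimodular), so every extra norm-$2$ vector is orthogonal to $\br$; hence $L(K_8)$ is strongly maximal and no other candidate is. The principal obstacle in executing this plan is the exceptional case analysis: for each of $E_6, E_7, E_8$ with a fixed root $\br$ one must enumerate, up to the stabilizer of $\br$ in the Weyl group, the maximal subsets of $\{\beta : (\beta,\br) = 1\}$ with pairwise inner products in $\{0,1\}$ and match each with a graph in the theorem, which interweaves root-system combinatorics with the identification of line-graph structures.
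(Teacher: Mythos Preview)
Your approach is essentially the paper's: both pass via Corollary~\ref{cor:main} to the root lattice $\Lambda(G)$ generated by $\{\alpha^{(x)}\}\cup\{\br\}$, invoke the ADE classification, identify the switching classes attached to $\sD_n$ and $\sE_n$ with the line graphs in the statement (the paper's Lemma~\ref{lem:SC}), and then decide maximality at each rank by the known containments among irreducible root lattices (the paper's Lemma~\ref{lem:reduction}). Two small remarks: the lattice is automatically irreducible because $\tilde G$ is connected, and it cannot be of type~$A$ since $[\sA_n]=[K_{n-1}]$ has largest Seidel eigenvalue~$1$, so your orthogonal-direct-sum step and the ``summands of type $A_k$'' clause are superfluous; and your strong-maximality argument for $L(K_8)$ via the even-unimodularity of $\sE_8$ (forcing any root overlattice to split as $\sE_8\oplus\Lambda'$, whence any norm-$2$ vector with $(\beta,\br)=1$ already lies in $\sE_8$) is a pleasant variant of the paper's, which simply quotes that no irreducible root lattice of type $D$ or $E$ properly contains $\sE_8$.
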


Since $N^*_{1/3}(r)$ is the maximum order of a Seidel matrix $S$ with largest eigenvalue $3$ and $\rk (3\bI-S)=r$, Theorem~\ref{thm:3} implies
the following corollary.

\begin{corollary}	\label{cor:1/3}
	Let $r$ be an integer at least $3$.
	Then $N_{1/3}^*(r)$ equals $10$ if $r=5$, $16$ if $r=6$, $28$ if $r=7$ and $2(r-1)$ otherwise.
	More precisely, an arbitrary set of equiangular lines with common angle $\arccos(1/3)$ of rank $r$ and cardinality $N_{1/3}^*(r)$ is induced by the Seidel matrix of a graph switching equivalent to $L(K_5)$ if $r=5$, $L(K_6)+K_1$ if $r=6$, $L(K_8)$ if $r=7$ and $L(K_{2,r-1})$ otherwise.
\end{corollary}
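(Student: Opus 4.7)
The plan is to deduce Corollary~\ref{cor:1/3} directly from Theorem~\ref{thm:3} via the Gram matrix dictionary between equiangular line sets and Seidel matrices set up in the introduction. Given any set $\mathcal{L} = \{\R\bu_1,\ldots,\R\bu_n\}$ of $n$ equiangular lines with angle $\arccos(1/3)$ of rank $r$, I would choose unit spanning vectors $\bu_i$. Then the matrix $M$ with $(i,j)$-entry $3(\bu_i,\bu_j)$ has diagonal $3$ and off-diagonal entries $\pm 1$, so $M = 3\bI - S$ for some Seidel matrix $S$ of order $n$. Since $M$ is positive semi-definite of rank $r$, the largest eigenvalue of $S$ is at most $3$; when $n = N^*_{1/3}(r)$ the claimed values all satisfy $n > r$, so $M$ is rank-deficient and the largest eigenvalue of $S$ is exactly $3$ with $\rk(3\bI - S) = r$. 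Reversing $\bu_i \mapsto -\bu_i$ corresponds to switching $S$ at vertex $i$, so the switching class of a graph $G$ with $S(G) = S$ is canonically associated with $\mathcal{L}$.

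Next, I would argue that if $n = N^*_{1/3}(r)$ then $G$ is forced to be maximal in the sense of Definition~\ref{dfn:max}. Indeed, any Seidel matrix $S'$ properly containing $S(G)$ as a principal submatrix with the same largest eigenvalue $3$ and the same value of $\rk(3\bI - S')$ would, via the same dictionary, produce an equiangular line system of rank $r$ with cardinality strictly greater than $N^*_{1/3}(r)$, contradicting its definition. Hence Theorem~\ref{thm:3} applies to $G$.

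Finally, writing $m$ for the multiplicity of the eigenvalue $3$ of $S$, the identity $\rk(3\bI - S) = n - m$ identifies the parameter $n - m$ in Theorem~\ref{thm:3} with our rank $r$. Reading off the resulting case division and tabulating orders --- $|V(L(K_5))| = 10$ versus $|V(L(K_{2,4}))| = 8$ for $r=5$; $|V(L(K_6)+K_1)| = 16$ versus $|V(L(K_{2,5}))| = 10$ for $r=6$; $|V(L(K_8))| = 28$ for $r=7$; and $|V(L(K_{2,r-1}))| = 2(r-1)$ for $r=3,4$ or $r\geq 8$ --- the maximum order in each row produces the claimed value of $N^*_{1/3}(r)$ and the corresponding unique switching class realizing it. There is no genuine obstacle; the only step needing explicit justification is the identification $n - m = r$, which is precisely what makes the case division of Theorem~\ref{thm:3} in terms of $n - m$ transcribe faithfully into the corollary's case division in terms of $r$.
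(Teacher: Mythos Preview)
Your proposal is correct and follows exactly the derivation the paper intends: the paper states Corollary~\ref{cor:1/3} as a direct consequence of Theorem~\ref{thm:3} with no separate proof, and your elaboration of the Seidel-matrix/line-system dictionary, the maximality argument, and the identification $r = n - m$ fills in precisely those details. The only cosmetic point is the forward reference to ``the claimed values'' when arguing $n > r$; to avoid any appearance of circularity you can first use the graphs listed in Theorem~\ref{thm:3} as witnesses that $N^*_{1/3}(r) \geq 2(r-1) > r$ for all $r \geq 3$, and then proceed as you describe.
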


\begin{definition}	\label{dfn:Lambda}
	For a graph $G$ whose cone $\tilde{G}$ has smallest eigenvalue at least $-2$,
	we define $\Lambda(G)$ to be the lattice generated by vectors of which Gram matrix equals $A(\tilde{G}) + 2\bI$. And we denote by $\rk \Lambda(G)$ the rank of $\Lambda(G)$, which equals $\rk (A(\tilde{G}) + 2\bI)$.
\end{definition}

Corollary~\ref{cor:main} implies the following.

\begin{lemma}	\label{lem:rk}
	For a graph $G$ with largest Seidel eigenvalue at most $3$, $\rk(3\bI-S(G)) +1 = \rk \Lambda(G)$.
\end{lemma}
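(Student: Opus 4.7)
The plan is to combine two ingredients already supplied in the paper, so the argument collapses to a single chain of equalities. The hypothesis that $G$ has largest Seidel eigenvalue at most $3$ puts us precisely in the setting of Corollary~\ref{cor:main}. That corollary does two things for us at once: it guarantees that the cone $\tilde{G}$ has smallest eigenvalue at least $-2$, so $A(\tilde{G})+2\bI$ is genuinely positive semidefinite and the lattice $\Lambda(G)$ of Definition~\ref{dfn:Lambda} is well defined; and it yields the identity
\[
\rk(3\bI - S(G)) + 1 \;=\; \rk(A(\tilde{G}) + 2\bI).
\]

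Next, I would appeal directly to Definition~\ref{dfn:Lambda}, which bakes in the identity $\rk \Lambda(G) = \rk(A(\tilde{G}) + 2\bI)$. The underlying linear-algebra fact is the standard one that a lattice generated by vectors $v_1,\dots,v_k$ in a Euclidean space has $\mathbb{Z}$-rank equal to the rank of the Gram matrix $((v_i,v_j))_{i,j}$, since both quantities coincide with $\dim \mathrm{span}_{\mathbb{R}}\{v_1,\dots,v_k\}$. This is arguably the only step requiring any justification at all, but if needed it can be dispatched in one sentence: the $\Z$-rank of $\Lambda(G)$ equals the real dimension of its $\R$-span, and that real dimension equals the rank of the Gram matrix of any generating set.

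Chaining the two displayed equalities then gives the claim $\rk(3\bI - S(G)) + 1 = \rk \Lambda(G)$. I do not foresee any real obstacle here, as the substantive content has been absorbed into Corollary~\ref{cor:main} and the definition of $\Lambda(G)$; the lemma is essentially a repackaging of Corollary~\ref{cor:main} in lattice-theoretic language, convenient for use in later sections.
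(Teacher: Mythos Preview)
Your proposal is correct and essentially identical to the paper's approach: the paper simply states that the lemma follows from Corollary~\ref{cor:main}, and you have spelled out precisely that deduction together with the observation from Definition~\ref{dfn:Lambda} that $\rk \Lambda(G) = \rk(A(\tilde{G})+2\bI)$.
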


A vector of norm $2$ is called a \emph{root}, and an integral lattice generated by roots is called a \emph{root lattice}.
If $G$ is a graph whose cone has smallest eigenvalue at least $-2$, then $\Lambda(G)$ is an irreducible root lattice.
It is known that the irreducible root lattices are enumerated up to isometry as follows:
\begin{align*}
	\sA_n &:= \{ \bv \in \Z_{n+1} \mid (\bv,\bj) = 0\} \quad (n \in \Z_{\geq 1}),\\
	\sD_n &:= \{ \bv \in \Z_n \mid (\bv,\bj) \in 2\Z \} \quad (n \in \Z_{\geq 4}),\\
	\sE_8 &:= \sD_8 \sqcup \left( \bj/2 + \sD_8 \right),\\
	\sE_7 &:= \{ \bv \in \sE_8 \mid (\bv,\be_1-\be_2)=0\}, \\
	\sE_6 &:= \{ \bv \in \sE_8 \mid (\bv,\be_1-\be_2) = (\bv,\be_2-\be_3) = 0\} .
\end{align*}
Here $\be_i$ denotes the vector of which the $i$-th entry is $1$ and the others are $0$.
We say that $\sD_n$ $(n \in \Z_{\geq 4})$ is a \emph{root lattice of type $D$},
and $\sE_n$ $(n=6,7,8)$ is a \emph{root lattice of type $E$}.
A large number of non-isomorphic connected graphs can give rise to the same irreducible root lattice. However, there is a natural way to recover a switching class of a graph from each irreducible root lattice.
\begin{definition}	\label{dfn:[L]}
Let $\sL$ be an irreducible root lattice.
The switching class, denoted by $[\sL]$, is defined to be the \emph{switching class} $[L]$ of a graph $L$ chosen as follows:  	
Let $\br$ be a root in $\sL$,
and $N$ the set of roots $\bv$ in $\sL$ with $(\br,\bv) = 1$.
Choose a subset $X \subset N$ of cardinality $|N|/2$ which has no roots $\bu$ and $\bv$ with $\bu = \br-\bv$.
Let $L$ be a graph such that $A(L) + 2\bI$ coincide with the Gram matrix of $X$.
\end{definition}
In this definition, we note that for two distinct roots $\bu$ and $\bv$ in $X$, the inner product $(\bu,\bv)$ is either $0$ or $1$, and the desired graph $L$ exists.
Since $\br$ is the switching root of $L$, the argument after Definition~\ref{dfn:swroot} implies that $[L]$ does not depend on the choice of $X$.
In addition, since the automorphism group of $\sL$ acts transitively on the roots in $\sL$, we see that $[L]$ does not depend on the choice of $\br$, and that $[\sL]$ is well-defined.
Note that the vectors with Gram matrix $A(L)+2\bI$ may not generate $\sL$, although that of the cone $\tilde{L}$ over $L$ always do.
Next we describe the switching class $[\sL]$ for each root lattice $\sL$.

\begin{lemma}	\label{lem:SC}
	The following hold.
	\begin{enumerate}[(1)]
		\item $[\sA_n] = [K_{n-1}]$ for each $n \in \Z_{\geq 1}$.	\label{lem:SC:1}
		\item $[\sD_n] = [L(K_{2,n-2})]$ for each $n \in \Z_{\geq 4}$.	\label{lem:SC:2}
		\item $[\sE_ 8] = [L(K_8)]$,	
		 $[\sE_ 7] = [L(K_6) + K_1]$, and
		 $[\sE_ 6] = [L(K_5)]$.		\label{lem:SC:3}
	\end{enumerate}
	In particular, all the graphs in switching classes $[\sD_n]$ $(n \geq 4)$ and $[\sE_n]$ $(n = 6,7,8)$ have largest Seidel eigenvalue $3$,
	and those in $[\sA_n]$ $(n \geq 1)$ have largest Seidel eigenvalue $1$.
\end{lemma}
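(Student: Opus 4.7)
My plan is to unpack Definition~\ref{dfn:[L]} for each irreducible root lattice $\sL$ by picking an explicit root $\br \in \sL$ and an explicit set $X$ of $|N|/2$ roots of $\sL$ with $(\br,\bv)=1$ meeting each pair $\{\bv, \br-\bv\}$ exactly once, then reading off the graph $L$ from the Gram matrix of $X$. Throughout the exceptional cases I would take $\br = \frac{1}{2}\bj_8$, which lies in all of $\sE_8$, $\sE_7$, and $\sE_6$ as defined in the paper; for the classical series I take $\br = \be_1 - \be_2 \in \sA_n$ and $\br = \be_1 + \be_2 \in \sD_n$.

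For $\sA_n$, take $X = \{\be_1 - \be_k : 3 \leq k \leq n+1\}$: the $n-1$ pairs are $\{\be_1 - \be_k, \be_k - \be_2\}$, the Gram matrix is $\bI + \bJ$, so $L = K_{n-1}$. For $\sD_n$, take $X = \{\be_i + \be_k : i \in \{1,2\},\ 3 \leq k \leq n\}$; labelling $(i,k) \leftrightarrow a_i b_k \in E(K_{2,n-2})$ together with $(\be_i + \be_k, \be_j + \be_l) = \delta_{ij} + \delta_{kl}$ yields $A(L(K_{2,n-2})) + 2\bI$. For $\sE_8$, take $X = \{\be_i + \be_j : 1 \leq i < j \leq 8\}$; the 28 pairs are $\{\be_i + \be_j,\ \frac{1}{2}(-\be_i - \be_j + \sum_{k \neq i,j}\be_k)\}$, and $(\be_i + \be_j, \be_k + \be_l) = |\{i,j\}\cap\{k,l\}|$ gives $L(K_8)$. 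For $\sE_7$, take $X = \{\be_1 + \be_2\} \cup \{\be_i + \be_j : 3 \leq i < j \leq 8\}$; the Gram matrix is block-diagonal with a single $[2]$ block and $A(L(K_6)) + 2\bI$, so $L = L(K_6) + K_1$. For $\sE_6$, take $X = \{\be_i + \be_j : 4 \leq i < j \leq 8\}$; the Gram matrix equals $A(L(K_5)) + 2\bI$. In each exceptional case I would verify $|X| = |N|/2$ using the counts $|N| = 20, 32, 56$ for $\sE_6, \sE_7, \sE_8$, which follow from the well-known identifications of the root systems orthogonal to a given root as $\sA_5, \sD_6, \sE_7$ respectively; and I would confirm that $\br - \bv$ never lies in $X$ by noting $\br - \bv$ is half-integer while $X$ consists of integer roots.

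For the ``in particular'' statement, since switching preserves the Seidel spectrum it suffices to compute for one representative of each switching class. For $\sD_n$ ($n \geq 4$) and $\sE_n$ ($n \in \{6,7,8\}$), the vectors $X \cup \{\br\}$ span a space of dimension at most the rank of $\sL$ (which is $n$ or $6,7,8$), while $|X|+1$ strictly exceeds this in every instance; hence $A(\tilde L) + 2\bI$ is singular and Corollary~\ref{cor:main} implies $3$ is the largest Seidel eigenvalue of $L$. For $\sA_n$ a direct computation yields $S(K_{n-1}) = \bI - \bJ$ with spectrum $\{1^{n-2}, 2-n\}$, so the largest Seidel eigenvalue is $1$. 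The main technical obstacle is the bookkeeping for $\sE_7$ and $\sE_6$, where one must correctly handle both integer and half-integer roots in $\sE_8$ subject to the linear constraints defining these sublattices; I would rely on the standard parity description of $\sE_8$-roots to make each pair-count and pair-avoidance check a finite and mechanical verification.
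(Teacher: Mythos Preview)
Your proposal is correct and follows essentially the same approach as the paper: you choose the identical switching roots $\br$ and identical representative sets $X$ in every case, and your ``in particular'' argument (that $A(\tilde L)+2\bI$ is a Gram matrix of more vectors than the ambient rank, hence is positive semidefinite and singular, so Corollary~\ref{cor:main} gives largest Seidel eigenvalue exactly $3$) is the same rank-counting argument the paper uses. The only differences are cosmetic: you are slightly more explicit than the paper in verifying $|X|=|N|/2$ and the pair-avoidance condition $\br-\bv\notin X$ for the exceptional lattices, whereas the paper simply writes down $X$ and leaves these checks implicit.
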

\begin{proof}
	Throughout this proof, we firstly fix a (switching root) $\br$, secondly choose a subset $X$ as in Definition~\ref{dfn:[L]}, and determine the switching class $[L]$.

	First we show \ref{lem:SC:1}.
	Let $\br := \be_1-\be_2$, and $X := \{ \be_1 - \be_i \mid i = 3,\ldots,n+1\}$.
	Then since the Gram matrix of $X$ coincide with $A(K_{n-1}) + 2 \bI_{n-1}$, we have $[\sA_n] = [K_{n-1}]$.
	
	Next we show \ref{lem:SC:2}.
	Let $\br := \be_1 + \be_2$, and $X := \{ \be_i+ \be_j \mid i=1,2 \text{ and } j = 3,\ldots,n \}$.
	Then the matrix whose columns are vectors in $X$ is equal to the incidence matrix of $K_{2,n-2}$.
	Hence the Gram matrix of $X$ coincide with $A(L(K_{2,n-2})) + 2\bI_{n}$, we have $[\sD_n] = [L(K_{2,n-2})]$.
	
	We show that $[\sE_ 8] = [L(K_8)]$.
	Let $\br := \bj/2$, and
	\[
		X
		:= \{ \bv \in \sD_8 \mid (\br,\bv) = 1 \}
		= \{ \be_i + \be_j \mid 1 \leq i < j \leq 8 \}.
	\]		
	By an argument similar to that to show \ref{lem:SC:2}, we obtain the desired result.
	
	We show that $[\sE_ 7] = [L(K_7)]$.
	Let $\br := \bj/2$, and
	\begin{align*}
		X
		:= \{ \bv \in \sD_8 \mid (\br,\bv) = 1 \text{ and } (\bv,\be_1-\be_2) = 0\}	
		= \{ \be_1 + \be_2 \} \cup \{ \be_i + \be_j \mid 3 \leq i < j \leq 8 \}.
	\end{align*}
	This implies the desired result as well.
	
	We can verify that $[\sE_ 6] = [L(K_5)]$ by letting $\br := \bj/2$ and
	\begin{align*}
		X
		:= &\{ \bv \in \sD_8 \mid (\br,\bv) = 1 \text{ and } (\bv,\be_1-\be_2)  = (\bv,\be_2-\be_3) = 0\}	\\
		= &\{ \be_i + \be_j \mid 4 \leq i < j \leq 8 \}.
	\end{align*}
	
	Finally, let $\sL$ be an irreducible root lattice of type $D$ or $E$, and fix a graph $L \in [\sL]$.
	Then by Definition~\ref{dfn:[L]}, there exists a switching root of $L$ in $\sL$.
	Hence the cone $\tilde{L}$ has smallest eigenvalue at least $-2$.
	Since $[\sL]$ has been revealed above,
	we obtain \[\rk( A(\tilde{L}) + 2\bI ) \leq \rk \sL < \order \tilde{L}.\]
	This means that $\tilde{L}$ has smallest eigenvalue $-2$.
	By Corollary~\ref{cor:main}, the largest Seidel eigenvalue of $L$ equals $3$.	
	Since the Seidel spectrum of $K_{n-1}$ is $\{1,[-n+2]^{n-2}\}$, we obtain the desired conclusion.
\end{proof}

\begin{lemma}	\label{lem:reduc_to_L}
	For a graph $G$ with largest Seidel eigenvalue at most $3$,
	there exists a supergraph $L \in [\Lambda(G)]$ of $G$.
	In particular, $\Lambda(L)=\Lambda(G)$.
\end{lemma}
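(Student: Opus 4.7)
The plan is to realise $\Lambda(G)$ concretely via Corollary~\ref{cor:main}, view the vectors corresponding to $V(G)$ as elements of the root set $N$ in Definition~\ref{dfn:[L]}, and then complete them to a full ``half'' of $N$ as demanded by that definition.

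By Corollary~\ref{cor:main}, $A(\tilde G)+2\bI$ is positive semi-definite, hence the Gram matrix of some family of vectors in a Euclidean space. Label by $\br$ the vector attached to the apex of $\tilde G$ and by $\alpha^{(x)}$ the vector attached to $x\in V(G)$. By Definition~\ref{dfn:Lambda} these vectors generate $\Lambda(G)$; all of them are roots, and $(\br,\alpha^{(x)})=1$ for every $x$. Thus $\br$ is eligible to play the role of the distinguished root in Definition~\ref{dfn:[L]}, and each $\alpha^{(x)}$ belongs to
\[
N:=\{\bv\in\Lambda(G):(\bv,\bv)=2,\ (\br,\bv)=1\}.
\]

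The key point is that the $\alpha^{(x)}$ lie in pairwise distinct orbits of the fixed-point-free involution $\bv\mapsto\br-\bv$ on $N$ (fixed-point-freeness follows from $(\br,\br)=2\neq 8=(2\bv,2\bv)$). Indeed, if $\alpha^{(x)}+\alpha^{(y)}=\br$ for some $x\neq y$, pairing with $\alpha^{(x)}$ yields $(\alpha^{(x)},\alpha^{(y)})=1-2=-1$, which contradicts $(\alpha^{(x)},\alpha^{(y)})=A(G)_{xy}\in\{0,1\}$. Consequently $\{\alpha^{(x)}:x\in V(G)\}$ can be extended to a set $X\subseteq N$ of cardinality $|N|/2$ by choosing one representative from each remaining orbit pair. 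By Definition~\ref{dfn:[L]} the graph $L$ determined by $A(L)+2\bI=$ Gram matrix of $X$ lies in $[\Lambda(G)]$, and since the principal submatrix of this Gram matrix on $\{\alpha^{(x)}\}$ equals $A(G)+2\bI$, the graph $G$ is an induced subgraph of $L$.

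For the last assertion, the vectors realising the Gram matrix of $\tilde L$ are $\{\br\}\cup X\subseteq\Lambda(G)$, so $\Lambda(L)\subseteq\Lambda(G)$; conversely, the generators $\br$ and $\alpha^{(x)}$ of $\Lambda(G)$ already belong to $\{\br\}\cup X$, giving the reverse inclusion and hence equality. The only step carrying any real content is the short inner-product calculation ruling out $\alpha^{(x)}+\alpha^{(y)}=\br$; once that is in place, both the existence of $X$ and the two lattice inclusions are immediate consequences of Definitions~\ref{dfn:Lambda} and~\ref{dfn:[L]}.
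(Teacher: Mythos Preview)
Your argument is correct and follows essentially the same route as the paper's own proof: pick the switching root $\br$ as the apex vector of $\tilde G$, observe that the vectors $\alpha^{(x)}$ lie in $N$, extend them to a half-system $X$, and read off both $G\subseteq L$ and $\Lambda(L)=\Lambda(G)$. The paper merely asserts that such an $X$ can be chosen, whereas you actually supply the missing justification via the inner-product check ruling out $\alpha^{(x)}+\alpha^{(y)}=\br$; this makes your version more complete but not different in substance.
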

\begin{proof}
	Set $\sL := \Lambda(G)$.
	Let $\br$ be the root in $\sL$ corresponding to the vertex of $\tilde{G}$ added to $G$.
	Let $N$ be the set of roots in $\sL$ with $(\br,\bu) = 1$.
	Then we can choose a subset $X \subset N$ with $2|X| = |N|$ such that $X$ contains the roots in $\Lambda(G)$ corresponding to the vertices of $G$.
	By Definition~\ref{dfn:[L]}, we see that $G$ is an induced subgraph of some graph $L$ in $[\sL]$.
	Next we obtain
	$
		\sL		
		= \Lambda(G)
		\subset \Lambda(L)
		\subset \sL
	$
	as desired.
\end{proof}

\begin{lemma}	\label{lem:reduction}
	Let $\sL$ be an irreducible root lattice of type $D$ or $E$, and let $L \in [\sL]$ with $\sL = \Lambda(L)$.
	Then $L$ is maximal (resp.\ strongly maximal)
	if and only if
	there is no irreducible root lattice $\sM$ of type $D$ or $E$ satisfying the following~\ref{lem:reduction:1} and~\ref{lem:reduction:2} (resp.\ only the following~\ref{lem:reduction:1}).
	\begin{enumerate}[(1)]
		\item	\label{lem:reduction:1}
			The lattice $\sM$ properly containing $\sL$ up to isometry.			
		\item	\label{lem:reduction:2}
			The rank of $\sM$ equals that of $\sL$.
	\end{enumerate}
\end{lemma}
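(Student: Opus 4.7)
The plan is to transport the maximality conditions on Seidel matrices into the lattice-theoretic language of $\sL = \Lambda(L)$, using Lemmas~\ref{lem:rk}, \ref{lem:SC}, and~\ref{lem:reduc_to_L} as the main bridges. By Lemma~\ref{lem:rk}, the numerical rank condition in Definition~\ref{dfn:max} becomes the rank equality of root lattices in condition~\ref{lem:reduction:2}, so the two biconditionals reduce to a single master argument plus tracking of this extra constraint.

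For the forward implication, assume $L$ is not strongly maximal; then some graph $L'$ contains $L$ as a proper induced subgraph and has largest Seidel eigenvalue $3$. I set $\sM := \Lambda(L')$ and verify three things. First, $\sM$ is an irreducible root lattice of type $D$ or $E$: irreducibility follows from connectedness of $\tilde L'$, and type $A$ is excluded because, if $\sM$ were of type $A$, Lemma~\ref{lem:reduc_to_L} would yield a supergraph $L'' \supseteq L'$ in $[\sM]$ whose largest Seidel eigenvalue would be $1$ by Lemma~\ref{lem:SC}, contradicting Cauchy interlacing applied to the principal submatrix $S(L')$ of $S(L'')$. Second, using the cone vertex of $\tilde L'$ as a common switching root for both $L$ and $L'$, the vertex vectors of $L$ sit inside those of $L'$ and yield a canonical isometric embedding $\sL \hookrightarrow \sM$. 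Third, this inclusion is strict: if $\sL = \sM$, applying Lemma~\ref{lem:reduc_to_L} to $L'$ would produce $L'' \in [\sL]$ with $L' \subseteq L''$, forcing $L$ and $L''$ to lie in the single switching class $[\sL]$ despite having different orders.

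For the converse, suppose such an $\sM$ exists and pick $L' \in [\sM]$ with $\Lambda(L') = \sM$, which by Lemma~\ref{lem:SC} has largest Seidel eigenvalue $3$. To exhibit $L$ as a proper induced subgraph of some graph switching-equivalent to $L'$, I start from the given isometric embedding $\sL \hookrightarrow \sM$ and, using transitivity of $\mathrm{Aut}(\sM)$ on roots, compose it with an automorphism of $\sM$ sending the switching root $\br$ of $L$ to the switching root $\br'$ of $L'$. The vertex vectors of $L$ then land in $N' = \{\bu \in \sM : (\br',\bu) = 1\}$, and for each image $\bu$ either $\bu$ or $\br' - \bu$ lies in the defining set $X'$ of $L'$; switching $L$ at the vertices of the second type embeds a switching of $L$ as an induced subgraph of $L'$. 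Properness follows because $|V(L)| = |V(L')|$ would give $L \cong L'$, hence $\sL \cong \sM$, which is impossible when $\sL \subsetneq \sM$ isometrically---either the ranks differ, or, at equal ranks, the determinant of a finite-index sublattice strictly exceeds that of the ambient one. Under condition~\ref{lem:reduction:2}, Lemma~\ref{lem:rk} then delivers the rank constraint in Definition~\ref{dfn:max} automatically.

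I expect the main obstacle to be the converse direction, where the combinatorial inclusion ``$L \subset L'$ as induced subgraph'' has to be matched with the algebraic inclusion ``$\sL \subset \sM$ isometrically'' via a single choice of switching root; the transitivity of $\mathrm{Aut}(\sM)$ on roots is exactly what makes this alignment possible, but the bookkeeping of the accompanying switching on $L$ needs to be kept straight.
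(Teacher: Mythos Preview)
Your proof is correct and follows essentially the same route as the paper's: both reduce (strong) maximality of $L$ to the (non)existence of a proper root-lattice overlattice $\sM \supset \sL$ of type $D$ or $E$ via Lemmas~\ref{lem:rk}, \ref{lem:SC}, and~\ref{lem:reduc_to_L}. Your treatment of the converse direction (given $\sM$, exhibit a proper supergraph of $L$) is considerably more explicit than the paper's---you spell out the alignment of switching roots via transitivity of $\mathrm{Aut}(\sM)$ on roots and give a determinant argument for properness---whereas the paper compresses this into the one-line observation that $L = M$ if and only if $\sL = \sM$.
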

\begin{proof}
	By Lemma~\ref{lem:SC}, the largest Seidel eigenvalue of $L$ is $3$.
	Assume there exists a supergraph $H$ of $L$ with largest Seidel eigenvalue $3$.
	Applying Lemma~\ref{lem:reduc_to_L} with $G := H$ and setting $\sM := \Lambda(H)$, we have a supergraph $M \in [\sM]$ of $H$
	with $\sM = \Lambda(M)$.
	Then the largest Seidel eigenvalue of $M$ is at least that of $L$, and at most $3$ by Lemma~\ref{lem:SC}.
	Hence we see that $M$ has largest Seidel eigenvalue $3$, and $\sM$ is of type $D$ or $E$.	
	Without loss of generality, we may assume that $\sM$ contains $\sL$.
	Noting that $\sL = \Lambda(L)$ and $\sM = \Lambda(M)$,
	we see that
	$L = M$ if and only if $\sL = \sM$.
	This gives the desired equivalent condition for the graph $L$ to be strongly maximal.

	Finally, Lemma~\ref{lem:rk} implies that
	\[
		\rk (3\bI -S(L)) + 1= \rk \sL \quad \text{and} \quad \rk (3\bI -S(M))+1 = \rk \sM.
	\]
	Hence Condition~\ref{lem:reduction:2} is equivalent to $\rk (3\bI -S(L)) = \rk (3\bI -S(M))$.
	This gives the desired result.
\end{proof}

\begin{proof}[Proof of Theorem~\ref{thm:3}]
	Let $G$ be a maximal graph with largest Seidel eigenvalue $3$.
	By applying Lemma~\ref{lem:reduc_to_L} and setting $\sL := \Lambda(G)$,
	we obtain a supergraph $L \in [\sL]$ of $G$ with $\sL=\Lambda(L)$.
	Moreover, Lemma~\ref{lem:SC} implies that the largest Seidel eigenvalue of $L$ is $3$ and that $\Lambda(G)$ is of type $D$ or $E$.
	Since
	\[
		\rk( 3\bI - S(L) ) + 1 = \rk \Lambda(L) = \rk \sL = \rk \Lambda(G) = \rk( 3\bI - S(G) ) + 1,
	\]
	by Lemma~\ref{lem:rk},
	the maximal graph $G$ coincide with $L$.
	The following relations among root lattices are well known.
	\begin{align*}
		&\sD_4 \subset \sD_5 \subset \cdots, \sE_6 \subset \sE_7 \subset \sE_8,\\
		&\sD_6 \not\subset \sE_6,
		\sD_7 \not\subset \sE_7,
		\sD_8 \subset \sE_8,\\
		&\sE_n \not\subset \sD_{n'} \text{ for } n \text{ and } n'.
	\end{align*}	
	Therefore, Lemma~\ref{lem:reduction} implies the desired conclusion.
\end{proof}

\section{On the rank of $B_{\theta}(G)$}\label{sec:rank}
In the next lemma, we show that as long as there exists an eigenvector of $S(G)$ for the eigenvalue $2\theta-1$ which is not orthogonal to the all-ones vector, $\rk(A(G)+\theta \mathbf{I})=\rk(B_\theta(G))$ holds.

\begin{lemma}\label{mainev}
Let $G=(V,E)$ be a graph having largest Seidel eigenvalue $2 \theta -1$.
Let $\{\mathbf{\alpha}^{(x)}\mid x\in V\}$ be the set of vectors
satisfying \eqref{eq:Gram}.
Suppose that $\mathbf{v}$ is an eigenvector
of $S$ belonging to the eigenvalue $2\theta - 1$
and $(\mathbf{v},\mathbf{j})\neq0$.
Then
the vector
\[\mathbf{r} := \frac{2 }{(\mathbf{v},\mathbf{j})}
\sum_{x\in V} \mathbf{v}_x\mathbf{\alpha}^{(x)}
\]
is a switching root of $G$.
In particular, $\rk (B_{\theta}(G))= \rk (A(G) + \theta \mathbf{I})$.

\end{lemma}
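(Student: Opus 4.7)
The plan is to verify the two defining conditions of a switching root directly, leveraging a single algebraic identity that converts the Seidel eigenvalue condition into a statement about $A(G)+\theta\mathbf{I}$. First I would rewrite $S\mathbf{v}=(2\theta-1)\mathbf{v}$ using $S=\mathbf{J}-\mathbf{I}-2A$ together with $\mathbf{J}\mathbf{v}=(\mathbf{v},\mathbf{j})\mathbf{j}$. A routine rearrangement yields the identity
\[
(A+\theta\mathbf{I})\mathbf{v} \;=\; \frac{(\mathbf{v},\mathbf{j})}{2}\,\mathbf{j},
\]
which is the engine of the whole proof. Since $(\mathbf{v},\mathbf{j})\neq 0$, this simultaneously says that $\mathbf{j}$ lies in the column space of $A+\theta\mathbf{I}$ and supplies an explicit preimage.

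Next I would verify condition~(2) of Definition~\ref{dfn:swroot}. Expanding $(\mathbf{r},\alpha^{(x)})$ and substituting the Gram relation $(\alpha^{(y)},\alpha^{(x)})=(A+\theta\mathbf{I})_{yx}$, the sum collapses to $\frac{2}{(\mathbf{v},\mathbf{j})}\bigl((A+\theta\mathbf{I})\mathbf{v}\bigr)_x$, which by the displayed identity equals $\mathbf{j}_x=1$. A parallel computation handles condition~(1): $(\mathbf{r},\mathbf{r})$ reduces to $\frac{4}{(\mathbf{v},\mathbf{j})^2}\mathbf{v}^\top(A+\theta\mathbf{I})\mathbf{v}$, and applying the identity once more turns the quadratic form into $\frac{(\mathbf{v},\mathbf{j})}{2}(\mathbf{v},\mathbf{j})$, giving $2$. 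This shows $\mathbf{r}$ is a switching root.

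For the rank statement, the observation is that by the switching-root properties just established, $B_\theta(G)$ is precisely the Gram matrix of $\{\alpha^{(x)}\mid x\in V\}\cup\{\mathbf{r}\}$. However, by its very definition $\mathbf{r}$ is a linear combination of the $\alpha^{(x)}$, so adjoining it does not enlarge the linear span. Consequently the Gram rank is unchanged, yielding $\rk(B_\theta(G))=\rk(A+\theta\mathbf{I})$. I do not foresee any serious obstacle: the only content is the translation of the Seidel eigenvalue condition into the language of $A+\theta\mathbf{I}$, followed by two short Gram-matrix computations, with the rank assertion coming for free from the explicit dependence of $\mathbf{r}$ on the $\alpha^{(x)}$.
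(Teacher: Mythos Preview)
Your proof is correct and follows essentially the same route as the paper: both derive the key identity $(A+\theta\mathbf{I})\mathbf{v}=\tfrac{(\mathbf{v},\mathbf{j})}{2}\mathbf{j}$ from $S\mathbf{v}=(2\theta-1)\mathbf{v}$ (the paper packages this via the matrix $N$ with $N^\top N=A+\theta\mathbf{I}$ and the relation $2(N\mathbf{u})^\top(N\mathbf{v})=(\mathbf{u},\mathbf{j})(\mathbf{v},\mathbf{j})$), then specialize to obtain $(\mathbf{r},\mathbf{r})=2$ and $(\mathbf{r},\alpha^{(x)})=1$. Your rank argument, observing that $\mathbf{r}$ lies in the span of the $\alpha^{(x)}$ so that the Gram matrix $B_\theta(G)$ has the same rank as $A+\theta\mathbf{I}$, makes explicit what the paper leaves implicit in the phrase ``In particular''.
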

\begin{proof}
Let $S = S(G)$ and $A=A(G)$.
Let $N$ be the matrix whose columns are all the vectors $\alpha^{(x)}$.
Then $A + \theta \mathbf{I} = N^\top N$, and $\mathbf{r} = 2N \mathbf{v}/(\mathbf{v},\mathbf{j})$.
Since $2 (A + \theta \mathbf{I}) = ( (2 \theta - 1)\mathbf{I} - S ) + \mathbf{J}$, we have, for any vector $\mathbf{u}$,
\begin{align*}
	2(N \mathbf{u})^\top (N \mathbf{v}) = 2\mathbf{u}^\top ( A + \theta \mathbf{I} )\mathbf{v} = \mathbf{u}^\top( (2 \theta - 1 )\mathbf{I}) - S ) + \mathbf{J})\mathbf{v} = \mathbf{u}^\top \mathbf{J} \mathbf{v} = (\mathbf{u},\mathbf{j})(\mathbf{v},\mathbf{j}).
\end{align*}
Letting
$\mathbf{u} := \mathbf{v}$, we obtain
$(\mathbf{r},\mathbf{r})=2$. Similarly,
letting
$\mathbf{u} := \mathbf{e}_x$ for a vertex $x$, where $\mathbf{e}_x$ denotes the characteristic vector of $\{x\}$ indexed by $V$,
we have $N \mathbf{u} = \alpha^{(x)}$ and
$(\mathbf{u},\mathbf{j}) = 1$.
Hence $(\mathbf{r},\alpha^{(x)})=1$ holds.
\end{proof}

Seidel matrices with exactly two eigenvalues are known as regular two-graphs~\cite[Section 11.6]{godsil2013}.
Now we will look at graphs such that $S(G)$ has exactly two distinct eigenvalues $2\theta-1$ and $2\tau-1$ with respective multiplicities $m_{\theta}$ and $m_{\tau}$.
Then we have
\begin{align}	\label{eq:Seidel rel}
	-(2\theta-1)(2 \tau -1) = n-1 \text{ and } (2\theta-1)m_\theta + (2\tau-1)m_\tau =  0,
\end{align}
where $n$ is the order of $G$.
Simple examples are complete graphs and their complements.
It is easy to see that an arbitrary graph $G$ having largest Seidel eigenvalue at most $1$ is switching equivalent to the complete graph.
Thus, in order to avoid the trivial case, we assume that $\theta > 1$ if necessary.

A graph $G$ of order $n$ is said to be strongly regular with parameters $(n,k,a,c)$, if it is $k$-regular, every pair of adjacent vertices has $a$ common neighbours, and every pair of distinct nonadjacent vertices has $c$ common neighbours.

\begin{lemma}\label{srg}
Let $G$ be a graph of order $n \geq 2$ having two distinct Seidel eigenvalues $2\theta -1$ and $2\tau-1$, with respective multiplicities $m_{\theta}$ and $m_{\tau}$, where $\theta > 1 > \tau$.
Let $H \in [G]$ be such that $\rk (B_{\theta}(H)) \neq \rk (A(H) + \theta \mathbf{I})$. Then $H$ is a strongly regular graph with spectrum
$\{\frac{n - 2\tau}{2} , [-\tau]^{m_{\tau}-1} , [-\theta]^{m_{\theta}}\}$.
\end{lemma}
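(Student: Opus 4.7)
The plan is to exploit the contrapositive of Lemma~\ref{mainev} together with the identity $S(H)=\mathbf{J}-\mathbf{I}-2A(H)$.

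First, since $H \in [G]$, the matrices $S(H)$ and $S(G)$ have the same spectrum, so $S(H)$ has eigenvalues $2\theta-1$ and $2\tau-1$ with multiplicities $m_\theta$ and $m_\tau$. The contrapositive of Lemma~\ref{mainev} says that if $\rk(B_\theta(H)) \neq \rk(A(H)+\theta\mathbf{I})$, then no eigenvector of $S(H)$ for eigenvalue $2\theta-1$ is non-orthogonal to $\mathbf{j}$. Equivalently, the $(2\theta-1)$-eigenspace is contained in $\mathbf{j}^\perp$. Since $S(H)$ has only two distinct eigenvalues and the two eigenspaces are orthogonal complements of each other, $\mathbf{j}$ must lie in the $(2\tau-1)$-eigenspace.

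Second, I would translate $S(H)\mathbf{j}=(2\tau-1)\mathbf{j}$ into a statement about $A(H)$. Using $S(H)=\mathbf{J}-\mathbf{I}-2A(H)$ gives $A(H)\mathbf{j}=\frac{n-2\tau}{2}\mathbf{j}$, so $H$ is regular of valency $k:=(n-2\tau)/2$. Since $H$ is regular, $\mathbf{J}$ commutes with $A(H)$, so the eigenspaces of $A(H)$ decompose as $\mathbb{R}\mathbf{j}\oplus(\text{stuff in }\mathbf{j}^\perp)$, and on $\mathbf{j}^\perp$ we have $S=-\mathbf{I}-2A$. Hence an eigenvector in $\mathbf{j}^\perp$ with $A$-eigenvalue $\mu$ has $S$-eigenvalue $-1-2\mu$. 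Inverting, the eigenvalues $2\theta-1$ and $2\tau-1$ of $S(H)$ correspond to $A$-eigenvalues $-\theta$ and $-\tau$ on $\mathbf{j}^\perp$, with multiplicities $m_\theta$ and $m_\tau-1$ respectively. Together with $k$ on $\mathbb{R}\mathbf{j}$, this yields the claimed spectrum $\{k,[-\tau]^{m_\tau-1},[-\theta]^{m_\theta}\}$.

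Finally, I would conclude that $H$ is strongly regular. Since the multiplicity of $k$ in the $A(H)$-spectrum is one, and the multiplicity of $k$ as an eigenvalue of a regular graph equals its number of connected components, $H$ is connected. It remains to verify that $A(H)$ has exactly three distinct eigenvalues, after which the classical fact that a connected regular graph with three distinct eigenvalues is strongly regular finishes the proof. The values $k$, $-\tau$, $-\theta$ are pairwise distinct because $\theta>1>\tau$ forces $-\theta\neq-\tau$, while $k-(-\tau)=n/2>0$. The only subtle point is to rule out $m_\tau=1$ (which would eliminate $-\tau$ from the spectrum and collapse $H$ to two eigenvalues, i.e.\ a disjoint union of complete graphs, and then connectedness would force $H=K_n$ whose smallest eigenvalue $-1$ contradicts $\theta>1$). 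This little case check is the only mildly delicate step; the rest is linear algebra driven by the contrapositive of Lemma~\ref{mainev}.
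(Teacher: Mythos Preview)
Your argument is correct and follows the same route as the paper: apply the contrapositive of Lemma~\ref{mainev} to place $\mathbf{j}$ in the $(2\tau-1)$-eigenspace, read off regularity and the adjacency spectrum from $S=\mathbf{J}-\mathbf{I}-2A$, then invoke the three-eigenvalue criterion for strong regularity. The only variation is in excluding $m_\tau=1$: the paper uses the trace relations~\eqref{eq:Seidel rel} to force $\theta\in\{0,1\}$, while you observe that two adjacency eigenvalues plus connectedness force $H=K_n$ and hence $\theta=1$; both are fine. One tiny omission: in checking that $k,-\tau,-\theta$ are pairwise distinct you verified $k\neq-\tau$ and $-\tau\neq-\theta$ but not $k\neq-\theta$; this is immediate since $k\geq 0>-1>-\theta$.
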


\begin{proof}
By Lemma \ref{mainev}, $\rk (B_{\theta}(H))= \rk (A(H) + \theta \mathbf{I})$ implies that the all-ones vector $\mathbf{j}$ is orthogonal to the eigenspace of $S(H)$ for the eigenvalue $2\theta -1$.
Then $\mathbf{j}$ is an eigenvector of $S(H)$ belonging to $2\tau -1$,
and one of $A(H)$ belonging to the eigenvalue $(n-2\tau)/2$.
This means that $H$ is regular of valency $(n-2\tau)/2$.
Moreover we obtain the desired spectrum of $A(H)$.
Note that
$
	(n-2\tau)/2 > -\tau > -\theta.
$
Since the largest eigenvalue of $A(G)$ is simple, $G$ is connected.
If $m_\tau = 1$, then $\theta \in \{0, 1\}$ by~\eqref{eq:Seidel rel}.
Thus we may assume that $m_\tau \geq 2$, and then $A(H)$ has exactly three distinct eigenvalues.
Therefore the graph $H$ is strongly regular (see~\cite[Lemma 10.2.1]{godsil2013}).
\end{proof}

Gerzon showed the following bound for a Seidel matrix. This bound is usually called the absolute bound.
\begin{lemma}[{\cite[Theorem~11.2.1]{godsil2013}}]	\label{thm:abs_bound}
Let $G$ be a graph of order $n$ having largest Seidel eigenvalue $2\theta-1$.
Let $r = \rk(S(G) -(2\theta-1) \mathbf{I})$. Then $n \leq \frac{r(r+1)}{2}$.
\end{lemma}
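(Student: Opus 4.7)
The plan is the standard Gerzon-type argument via linear independence of rank-one symmetric matrices. Write $\lambda := 2\theta-1$, so $\lambda\bI - S(G)$ is positive semidefinite of rank $r$. First I would factor $\lambda\bI - S(G) = N^{\top}N$ with $N$ an $r \times n$ matrix, and set $\bu_i := N\be_i/\sqrt{\lambda}$ for $i = 1,\ldots,n$. These are unit vectors in $\R^r$ with $(\bu_i,\bu_j) = \pm 1/\lambda$ for $i \neq j$, so for $\lambda > 1$ they span a set of $n$ equiangular lines of rank $r$, exactly in the sense discussed in the introduction.

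The crux is to associate to each $\bu_i$ the rank-one symmetric matrix $U_i := \bu_i\bu_i^{\top}$, viewed as an element of the real vector space $\mathrm{Sym}_r$ of symmetric $r \times r$ matrices, and to show that $U_1,\ldots,U_n$ are linearly independent there. Using the Frobenius inner product $\langle X,Y\rangle := \tr(XY)$, a one-line calculation gives
\[
\langle U_i,U_j\rangle = \tr(\bu_i\bu_i^{\top}\bu_j\bu_j^{\top}) = (\bu_i,\bu_j)^2,
\]
which equals $1$ when $i = j$ and $1/\lambda^2$ otherwise. Consequently the Gram matrix of $(U_1,\ldots,U_n)$ with respect to $\langle\cdot,\cdot\rangle$ is $(1-\lambda^{-2})\bI + \lambda^{-2}\bJ$, whose eigenvalues are $1-\lambda^{-2}$ (with multiplicity $n-1$) and $1+(n-1)\lambda^{-2}$ (simple). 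For $\lambda > 1$ both are strictly positive, so the Gram matrix is positive definite and the $U_i$ are linearly independent in $\mathrm{Sym}_r$; since $\dim\mathrm{Sym}_r = r(r+1)/2$, this forces $n \leq r(r+1)/2$.

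The only real obstacle is the degenerate case $\lambda = 1$, where $1-\lambda^{-2}$ vanishes and the linear independence argument collapses. As already observed just before Lemma~\ref{srg}, any graph with largest Seidel eigenvalue at most $1$ is switching equivalent to a complete graph, so this case is either disposed of by direct inspection or simply ruled out by working under the standing convention $\theta > 1$, which is the only setting in which the absolute bound is of substantive interest.
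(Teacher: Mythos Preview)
The paper does not supply its own proof of this lemma; it is simply quoted from \cite[Theorem~11.2.1]{godsil2013} and attributed to Gerzon. Your argument is exactly the standard Gerzon proof found in that reference: realize the columns of a factorization of $\lambda\bI-S$ as equiangular unit vectors in $\R^r$, pass to the rank-one projectors $U_i=\bu_i\bu_i^{\top}$ in $\mathrm{Sym}_r$, and observe that their Frobenius Gram matrix $(1-\lambda^{-2})\bI+\lambda^{-2}\bJ$ is positive definite for $\lambda>1$, whence $n\leq\dim\mathrm{Sym}_r=r(r+1)/2$. So there is nothing to compare---your approach coincides with the intended one.

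One small correction to your closing remark. In the degenerate case $\lambda=1$ the bound is not merely awkward to prove but actually \emph{false} as literally stated: for $K_n$ with $n\geq2$ one has largest Seidel eigenvalue $1$ and $r=\rk(\bI-S(K_n))=\rk\bJ_n=1$, yet $n>r(r+1)/2=1$. Hence ``direct inspection'' does not dispose of the case; the only valid option is the second one you offer, namely to read the lemma under the standing hypothesis $\theta>1$ adopted by the paper immediately before Lemma~\ref{srg}. This is also the implicit setting in the cited source, where the absolute bound is phrased for equiangular lines with a nontrivial common angle.
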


If equality holds, then it is known (see \cite[p.\ 253]{godsil2013}) that $r \in \{2,3\}$ or $r = (2t+1)^2 -2$ holds,
where $t$ is a positive integer if $r > 3$.
Now we apply Lemma \ref{srg} to graphs with equality in the absolute bound.

\begin{theorem}\label{regular}
Let $G$ be a graph of order $n$ having largest Seidel eigenvalue $2\theta-1$.
Let $r = \rk(S(G) -(2\theta-1) \mathbf{I})$. Assume that $n = \frac{r(r+1)}{2}$. Then for all $H \in [G]$, we have $\rk (B_{\theta}(H))= \rk (A(H) + \theta \mathbf{I})$.
\end{theorem}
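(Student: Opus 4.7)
\textbf{Plan for the proof of Theorem \ref{regular}:} The plan is to argue by contradiction. Assume some $H \in [G]$ satisfies $\rk(B_\theta(H)) \neq \rk(A(H) + \theta \mathbf{I})$; by Lemma~\ref{mainev} this is equivalent to the condition that every $(2\theta - 1)$-eigenvector of $S(H)$ is orthogonal to $\mathbf{j}$.

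First I would show that the hypothesis $n = r(r+1)/2$ forces $S(G)$ to have exactly two distinct eigenvalues, a classical property of tight equiangular line systems. Set $\lambda = 2\theta - 1$ and choose unit vectors $u_1, \ldots, u_n \in \mathbb{R}^r$ with $(u_i, u_j) = \pm 1/\lambda$ for $i \neq j$, so that $\mathbf{I} - S(G)/\lambda$ is their Gram matrix. The $n$ rank-one projections $u_i u_i^{\top}$ are linearly independent in $\mathrm{Sym}(\mathbb{R}^r)$ (this is the content of Lemma~\ref{thm:abs_bound}) and thus form a basis, since $\dim \mathrm{Sym}(\mathbb{R}^r) = r(r+1)/2 = n$. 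Expressing $\mathbf{I}$ in this basis, taking the trace inner product with each $u_j u_j^{\top}$, and using the constant value $(u_i, u_j)^2 = 1/\lambda^2$ for $i \neq j$ forces the expansion coefficients to coincide; solving and comparing traces also pins down $\lambda^2 = r + 2$ and yields $\sum_i u_i u_i^{\top} = \frac{r+1}{2}\mathbf{I}$. Squaring the Gram matrix then gives
\[
\bigl( \mathbf{I} - S(G)/\lambda \bigr)^2 = \frac{r+1}{2}\bigl( \mathbf{I} - S(G)/\lambda \bigr),
\]
whence $S(G)$ has exactly two eigenvalues, $2\theta - 1$ and some $2\tau - 1$, with multiplicities $r(r-1)/2$ and $r$ respectively. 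Since the Seidel spectrum is a switching invariant, the same holds for $S(H)$.

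As $S(H)$ has only two distinct eigenvalues, the orthogonality of $\mathbf{j}$ to the $(2\theta - 1)$-eigenspace of $S(H)$ places $\mathbf{j}$ in the $(2\tau - 1)$-eigenspace. Applying Lemma~\ref{srg} to $H$, I conclude that $H$ is strongly regular with spectrum
\[
\left\{ \frac{n - 2\tau}{2},\ [-\tau]^{r - 1},\ [-\theta]^{r(r-1)/2}\right\}.
\]

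Finally I would invoke the classical absolute bound for strongly regular graphs (see \cite{godsil2013}): for a connected strongly regular graph on $v$ vertices, $v \leq m(m + 3)/2$ for each non-principal eigenvalue multiplicity $m$. Taking $m = r - 1$, the multiplicity of $-\tau$ in $A(H)$, this yields $n \leq (r - 1)(r + 2)/2 = n - 1$, the desired contradiction.

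The main obstacle is the first step, establishing the two-eigenvalue property; while classical, it requires brief justification here. Once it is in place, Lemmas~\ref{mainev} and~\ref{srg} immediately pin down the adjacency spectrum of $H$, and the SRG absolute bound then closes the argument via the arithmetic identity $(r - 1)(r + 2)/2 = n - 1$.
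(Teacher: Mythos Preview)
Your argument is correct and follows essentially the same route as the paper: both establish that $S(G)$ has exactly two eigenvalues (the paper simply cites \cite[Lemma~11.3.1]{godsil2013} rather than sketching the tight-frame computation), then apply Lemma~\ref{srg} to force $H$ to be strongly regular with an eigenvalue of multiplicity $r-1$, and finish by contradicting the absolute bound for strongly regular graphs via $(r-1)(r+2)/2 = n-1 < n$.
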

\begin{proof}
By \cite[Lemma 11.3.1]{godsil2013}, we know that $S(G)$ has exactly two distinct eigenvalues. Assume that $S(G)$ has eigenvalues $2\theta-1$ and $2\tau -1$ with respective multiplicities $m_{\theta}=n-r$ and $m_{\tau}=r$.
If there exists $H \in [G]$ such that the conclusion does not hold,
then by Lemma~\ref{srg},  $H$ is strongly regular, and
the eigenvalue $-\tau$ of $A(H)$ has multiplicity $m_\tau-1=r-1$.
By the absolute bound for strongly regular graphs (see \cite[p.\ 120]{brouwer2011spectra}), $n \leq (r-1)(r+2)/2$ holds.
This is a contradiction.
\end{proof}

\begin{remark}
The above result was shown for $r=7$ and $r=23$ by Koolen and Munemasa \cite{koolen}.
\end{remark}

\section{Strong maximality of graphs which attain the absolute bound}\label{sec:strong_max}
In this section, we prove that a graph which attains the absolute bound is strongly maximal.
Moreover, we discuss the uniqueness of strongly maximal graphs.

\begin{lemma}\label{p(RG)}
	Let $G$ be a $k$-regular graph of order $n$ whose smallest eigenvalue is $-\theta$.
	Then $p(G) = n/(k+\theta)$.
\end{lemma}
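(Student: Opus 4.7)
The plan is to use the congruence identity~\eqref{eq:block diag B^t} from Section~\ref{sec:sw}, which already does the heavy lifting. Since the transforming matrix there is unit upper/lower triangular (hence has positive determinant and is invertible), congruence preserves positive semi-definiteness. Thus, for any $t>0$,
\[
B^{(t)}_\theta(G)\ \text{is PSD}
\quad\Longleftrightarrow\quad
A(G)+\theta\mathbf{I}-\tfrac{1}{t}\mathbf{J}\ \text{is PSD},
\]
while for $t\le 0$ the matrix $B^{(t)}_\theta(G)$ cannot be PSD (the $(n+1,n+1)$-entry is $t$, and for $t=0$ the off-diagonal $\mathbf{j}\ne 0$ forces failure). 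This reduces the problem to identifying the smallest $t>0$ for which $A(G)+\theta\mathbf{I}-\tfrac{1}{t}\mathbf{J}$ is PSD.

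Next I would exploit the $k$-regularity of $G$. The all-ones vector $\mathbf{j}$ is a common eigenvector of $A(G)$ (eigenvalue $k$) and of $\mathbf{J}$ (eigenvalue $n$), so $\mathbf{j}$ is an eigenvector of $A(G)+\theta\mathbf{I}-\tfrac{1}{t}\mathbf{J}$ with eigenvalue
\[
k+\theta-\frac{n}{t}.
\]
On the orthogonal complement $\mathbf{j}^\perp$, the matrix $\mathbf{J}$ acts as the zero operator, so $A(G)+\theta\mathbf{I}-\tfrac{1}{t}\mathbf{J}$ restricts to $A(G)+\theta\mathbf{I}$, which is PSD because $-\theta$ is the smallest eigenvalue of $A(G)$ by hypothesis.

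Combining these two observations, $A(G)+\theta\mathbf{I}-\tfrac{1}{t}\mathbf{J}$ is PSD if and only if $k+\theta-n/t\ge 0$, i.e.\ $t\ge n/(k+\theta)$. Therefore
\[
p(G)=\min\{t>0\mid B^{(t)}_\theta(G)\text{ is PSD}\}=\frac{n}{k+\theta},
\]
as claimed. There is no real obstacle here: the only minor point to be careful about is ruling out $t\le 0$ cleanly and noting that $k+\theta>0$ (which follows since $G$ is nonempty and $-\theta\le k$ because $k$ is the largest eigenvalue of $A(G)$ for a regular graph), so that $n/(k+\theta)$ is a well-defined positive number.
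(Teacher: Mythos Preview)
Your argument is correct and follows essentially the same route as the paper: apply the congruence~\eqref{eq:block diag B^t} to reduce to positive semi-definiteness of $A(G)+\theta\mathbf{I}-\tfrac{1}{t}\mathbf{J}$, and then use regularity to see that its smallest eigenvalue is $\min\{0,\,k+\theta-n/t\}$. Your version is slightly more explicit in ruling out $t\le 0$ and in checking $k+\theta>0$, but the proof is the same in substance.
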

\begin{proof}
Assume that $t$ is a positive number.
By Equation~\eqref{eq:block diag B^t},
the matrix $B_\theta^{(t)}(G)$ is positive semi-definite if and only if so is $A(G) + \theta\mathbf{I}-\frac{1}{t}\mathbf{J}$.
Since $G$ is regular, the smallest eigenvalue of $A(G) + \theta\mathbf{I}-\frac{1}{t}\mathbf{J}$ is $0$ or $k+\theta-n/t$.
Hence the desired result follows.
\end{proof}

The next result gives a necessary and sufficient condition for $G$ to be extendable.

\begin{lemma}\label{extendable}
Let $G$ be a graph with largest Seidel eigenvalue $2\theta-1$.
Then $G$ is extendable if and only if there exists a graph $H$ in $[G]$ with $p(H) \leq 2 -\frac{1}{\theta}$.
\end{lemma}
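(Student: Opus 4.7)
The plan is to translate extendability into a positive‑semidefinite condition on a single bordered matrix, and then identify that condition with the inequality defining $p(H)$ via a Schur complement.

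First I would reduce to extending by \emph{exactly one isolated vertex}. If $S'$ witnesses that $G$ is extendable, then picking any one extra index $v$ of $S'$ and restricting $S'$ to $V(G)\cup\{v\}$ yields a Seidel matrix of order $n+1$ whose largest eigenvalue is sandwiched between $2\theta-1$ (from $S(G)$ as a principal submatrix) and $2\theta-1$ (from Cauchy interlacing against $S'$), and hence equals $2\theta-1$. Now switch this extended graph by the set $N\subseteq V(G)$ of vertices adjacent to $v$: the $v$-row/column of the Seidel matrix becomes the all-ones vector, so $v$ becomes isolated, while $G$ transforms into some $H\in[G]$. Since switching preserves the Seidel spectrum, $G$ is extendable if and only if there exists $H\in[G]$ such that $H\sqcup K_1$ has largest Seidel eigenvalue at most $2\theta-1$.

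Second, because the adjoined vertex is isolated,
\[
S(H\sqcup K_1)=\begin{pmatrix} S(H) & \bj \\ \bj^\top & 0 \end{pmatrix},
\]
so the condition above is equivalent to the bordered matrix
\[
\begin{pmatrix} (2\theta-1)\bI - S(H) & -\bj \\ -\bj^\top & 2\theta-1 \end{pmatrix}
\]
being positive semi-definite. Since any extendable $G$ has at least two vertices in its extension, $2\theta-1>0$, and taking the Schur complement on the bottom-right scalar entry reduces this to
\[
(2\theta-1)\bI - S(H) - \frac{1}{2\theta-1}\bJ \succeq 0.
\]

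Finally, substituting the identity $(2\theta-1)\bI - S(H) = 2(A(H)+\theta\bI) - \bJ$ used just before Theorem~\ref{main} simplifies the last inequality to
\[
A(H) + \theta\bI - \frac{\theta}{2\theta-1}\bJ \succeq 0.
\]
Setting $t:=(2\theta-1)/\theta = 2-1/\theta>0$ and applying Equation~\eqref{eq:block diag B^t}, this is exactly the condition that $B^{(t)}_{\theta}(H)$ be positive semi-definite, which by definition of $p(H)$ is $p(H)\le 2-1/\theta$. Chaining the equivalences proves the lemma. The part I expect to require the most care is the switching reduction in the first step: one must check that switching on $N$ inside the one-vertex extension simultaneously normalizes the new vertex to be isolated and turns $G$ into a genuine representative $H\in[G]$ on the original vertex set, all without disturbing the Seidel spectrum.
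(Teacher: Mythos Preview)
Your proof is correct and follows essentially the same route as the paper: reduce extendability to the existence of $H\in[G]$ with $S(H+K_1)$ having largest eigenvalue at most $2\theta-1$, then reduce the resulting positive-semidefinite condition to $B_\theta^{(2-1/\theta)}(H)\succeq 0$. The only cosmetic difference is that the paper invokes Theorem~\ref{main} and the congruence $B_\theta(H+K_1)\cong B_\theta^{(2-1/\theta)}(H)\oplus(\theta)$, whereas you carry out the equivalent Schur-complement computation directly on $(2\theta-1)\bI-S(H+K_1)$ before applying~\eqref{eq:block diag B^t}.
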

\begin{proof}
	The graph $G$ is extendable if and only if there exists $H\in [G]$ such that the largest eigenvalue of $S( H + K_1)$ is at most $2\theta-1$, where $H + K_1$ is the disjoint union of $H$ and $K_1$.
	By Theorem~\ref{main}, $S(H + K_1)$ has largest eigenvalue at most $2\theta-1$ if and only if $B_{\theta}(H + K_1)$ is positive semi-definite.
	Since $B_{\theta}(H + K_1)$ is congruent to $B_{\theta}^{(2-\frac{1}{\theta})}(H) \oplus(\theta	)$,
	the desired conclusion follows.
\end{proof}	

Combining Lemma~\ref{p(RG)} and {\ref{extendable}, we obtain the following lemma.	
	
\begin{lemma}	\label{k-reg extendable}
Let $G$ be a $k$-regular graph with largest Seidel eigenvalue $2\theta-1$.
If $\frac{n}{k+\theta} \leq 2-\frac{1}{\theta}$, then $G$ is extendable.
\end{lemma}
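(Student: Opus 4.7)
The plan is to obtain this lemma as a direct corollary of the two immediately preceding lemmas applied to $H:=G$ in its own switching class. First, I would unpack the hypothesis using Theorem~\ref{main}: since $S(G)$ has largest eigenvalue $2\theta-1$, the matrix $B_\theta(G)$ is positive semi-definite and $A(G)$ has least eigenvalue at least $-\theta$. In particular the condition needed in Definition~\ref{dfn:B} is met, so $p(G)$ is well-defined.

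Next, I would apply Lemma~\ref{p(RG)} to the $k$-regular graph $G$ to get the closed-form value
\[
p(G)=\frac{n}{k+\theta}.
\]
Combining this with the numerical hypothesis $\frac{n}{k+\theta}\le 2-\frac{1}{\theta}$ yields $p(G)\le 2-\frac{1}{\theta}$. Since $G$ trivially belongs to its own switching class $[G]$, the criterion of Lemma~\ref{extendable} is fulfilled with $H:=G$, and we conclude that $G$ is extendable.

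There is essentially no obstacle; the statement is flagged by the authors as arising from ``combining Lemma~\ref{p(RG)} and \ref{extendable}''. The only subtlety worth a sentence is that Lemma~\ref{p(RG)} is stated under the assumption that the smallest eigenvalue of $A(G)$ equals $-\theta$, whereas Theorem~\ref{main} guarantees only ``at least $-\theta$''. However, the proof of Lemma~\ref{p(RG)} uses this condition only through the fact that $A(G)+\theta\mathbf{I}$ is positive semi-definite, so the formula $p(G)=n/(k+\theta)$ remains valid under the weaker hypothesis; hence no modification of the argument is required.
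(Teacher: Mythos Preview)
Your proposal is correct and follows exactly the approach the paper indicates (``Combining Lemma~\ref{p(RG)} and~\ref{extendable}''), choosing $H:=G$ in its own switching class. Your remark that Lemma~\ref{p(RG)} remains valid when the smallest eigenvalue of $A(G)$ is merely at least $-\theta$ (rather than exactly $-\theta$) is a correct and worthwhile observation, since in the $k$-regular case the eigenvalues of $A(G)+\theta\mathbf{I}-\frac{1}{t}\mathbf{J}$ orthogonal to $\mathbf{j}$ are then nonnegative, so positive semi-definiteness still reduces to $k+\theta-n/t\ge 0$.
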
	

The converse of this lemma is false.
In fact, the triangular graph $T(7)$ is a counter example.
Indeed, $T(7)$ is a strongly regular graph with parameters $(21, 10, 5, 4)$ and distinct eigenvalues $10, 3, -2$, and we have $p(T(7)) = 1.75 > 2 - \frac{1}{2}$.
However, as $T(8)$ has largest Seidel eigenvalue $3$, we see that $T(7)$ is extendable.

\begin{lemma}	\label{srg3}
	Let $G$ be a graph of order $n$ having two distinct Seidel eigenvalues $2\theta -1$ and $2\tau-1$, where $\theta > 1 >\tau$.
	Then the following are equivalent:
	\begin{enumerate}[(1)]
		\item $G$ is extendable,	\label{srg3:1}
		\item There exists $H \in[G]$ such that $p(H) < 2$,	\label{srg3:3}			
		\item There exists $H \in[G]$ such that $\rk (B_\theta(H)) \neq \rk(A(H)+\theta \mathbf{I})$,	\label{srg3:5}
		\item There exists $H \in[G]$ such that $H$ is a strongly regular graph with degree $k=(n-2\tau)/2$.	\label{srg3:4}
	\end{enumerate}
\end{lemma}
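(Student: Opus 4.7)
The plan is to prove the cycle $(1)\Rightarrow(2)\Rightarrow(3)\Rightarrow(4)\Rightarrow(1)$, which then gives the four-way equivalence. For $(1)\Rightarrow(2)$, Lemma~\ref{extendable} furnishes some $H\in[G]$ with $p(H)\le 2-1/\theta$, and the hypothesis $\theta>1$ yields $2-1/\theta<2$, so in particular $p(H)<2$.

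For $(2)\Rightarrow(3)$, I would use the block-diagonalization~\eqref{eq:block diag B^t} together with the identity $2(A(H)+\theta\mathbf{I})-\mathbf{J}=(2\theta-1)\mathbf{I}-S(H)$. These reduce $p(H)<2$ to the existence of $\varepsilon>0$ such that $\frac{1}{2}((2\theta-1)\mathbf{I}-S(H))-\varepsilon\mathbf{J}$ is positive semi-definite. Testing this inequality on any eigenvector $v$ of $S(H)$ for $2\theta-1$ (on which the first summand vanishes) forces $(v,\mathbf{j})=0$, so $\mathbf{j}$ is orthogonal to the whole eigenspace of $S(H)$ for $2\theta-1$. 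Hence the kernel of $A(H)+\theta\mathbf{I}$ equals this eigenspace rather than a proper subspace of it, and a direct rank count using $\rk(B_\theta(H))=\rk((2\theta-1)\mathbf{I}-S(H))+1$ yields $\rk(B_\theta(H))=n+1-m_\theta\neq n-m_\theta=\rk(A(H)+\theta\mathbf{I})$, which is $(3)$.

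The implication $(3)\Rightarrow(4)$ is an immediate application of Lemma~\ref{srg}: since $H\in[G]$ has the same Seidel spectrum $\{2\theta-1,2\tau-1\}$ as $G$, the lemma gives that $H$ is strongly regular with spectrum $\{(n-2\tau)/2,[-\tau]^{m_\tau-1},[-\theta]^{m_\theta}\}$, hence of degree $k=(n-2\tau)/2$. For $(4)\Rightarrow(1)$, this spectrum has smallest eigenvalue $-\theta$, so Lemma~\ref{p(RG)} gives $p(H)=n/(k+\theta)=2n/(n+2(\theta-\tau))$. Using $n-1=-(2\theta-1)(2\tau-1)$ from~\eqref{eq:Seidel rel}, the inequality $p(H)\le 2-1/\theta$ simplifies to $(2\theta-1)^2\ge 1$, which holds since $\theta>1$. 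Lemma~\ref{extendable} then yields that $G$ is extendable.

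The main obstacle I anticipate is the $(2)\Rightarrow(3)$ step, where the strict inequality $p(H)<2$ has to be translated into the non-mainness of the eigenvalue $2\theta-1$ of $S(H)$ and thence into the rank inequality. The remaining implications are largely mechanical, invoking Lemmas~\ref{extendable}, \ref{p(RG)}, and~\ref{srg} together with the Seidel eigenvalue relation~\eqref{eq:Seidel rel}.
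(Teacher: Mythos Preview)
Your proof is correct and follows the same cycle $(1)\Rightarrow(2)\Rightarrow(3)\Rightarrow(4)\Rightarrow(1)$ as the paper, invoking the same auxiliary lemmas (Lemmas~\ref{extendable}, \ref{srg}, \ref{p(RG)}, \ref{k-reg extendable} and relation~\eqref{eq:Seidel rel}) at the same places.

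The only local difference is in $(2)\Rightarrow(3)$. You use the block-diagonalization~\eqref{eq:block diag B^t} to produce an $\varepsilon>0$ with $\tfrac12((2\theta-1)\mathbf{I}-S(H))-\varepsilon\mathbf{J}\succeq 0$, and then test this against eigenvectors for $2\theta-1$ to conclude $\mathbf{j}\perp\ker((2\theta-1)\mathbf{I}-S(H))$; this forces $\ker(A(H)+\theta\mathbf{I})$ to be the full $(2\theta-1)$-eigenspace, giving the rank discrepancy. The paper instead splits on whether $\mathbf{j}$ lies in the image of $A(H)+\theta\mathbf{I}$: if not, the rank jumps trivially; if so, writing $(A(H)+\theta\mathbf{I})\mathbf{b}=\mathbf{j}$ it performs a congruence that isolates the scalar $2-(\mathbf{b},\mathbf{j})$ and uses $p(H)<2$ to show this scalar is positive. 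Your kernel/eigenvector argument and the paper's image/congruence argument are essentially dual to each other; yours is arguably more transparent about \emph{why} the strict inequality $p(H)<2$ matters, while the paper's avoids any appeal to the two-eigenvalue structure in that step.
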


\begin{proof}
	By Lemma~\ref{extendable}, \ref{srg3:1} implies \ref{srg3:3}.
	Suppose that \ref{srg3:3} is satisfied.
	Fix a graph $H$ such that $p(H) < 2$.
	Let $p := p(H)$.
	If the image of $A(H)+\theta \mathbf{I}$ does not contain the all-ones vector $\mathbf{j}$, then \ref{srg3:5} holds.
	Otherwise we may suppose that there exists a vector $\mathbf{b}$ such that $(A(H) + \theta\mathbf{I} )\mathbf{b} = \mathbf{j}$.
	Then $B_\theta(H)$ is congruent to
	\begin{align*}
		\begin{pmatrix}
			A(H) + \theta \mathbf{I} & \mathbf{0} \\
			\mathbf{0} & 2-(\mathbf{b},\mathbf{j})
		\end{pmatrix}
		=
		\begin{pmatrix}
			A(H) + \theta \mathbf{I} & \mathbf{0} \\
			\mathbf{0} & -(\mathbf{b},\mathbf{j}) + p
		\end{pmatrix}
		+
		\begin{pmatrix}
			\mathbf{0} & \mathbf{0} \\
			\mathbf{0} & 2-p
		\end{pmatrix}.
	\end{align*}
	Since the first term is congruent to $B_\theta^{(p)}(H)$, we obtain that $ -(\mathbf{b},\mathbf{j}) + p \geq 0$.
	Hence, by the assumption $p< 2$, we have $2-(\mathbf{b},\mathbf{j}) > 0$.
	This means that  \ref{srg3:5} holds.
	By Lemma~\ref{srg} \ref{srg3:5} implies \ref{srg3:4}.
	Finally, we suppose that \ref{srg3:4} is satisfied.
	By~\eqref{eq:Seidel rel}, we have
	\begin{align*}
		k = \frac{n-2\tau}{2}
		= \frac{(n-1) - (2\tau-1)}{2}
		= \frac{\theta(n-1)}{ 2\theta -1 }.
	\end{align*}
	This together with Lemma~\ref{p(RG)} imply that
	\begin{align*}
		\frac{n}{k+\theta}
		= \frac{n(2\theta-1)}{\theta(n-1) + \theta (2\theta -1 )}
		\leq \frac{n(2\theta-1)}{\theta(n-1) + \theta}
		= 2 - \frac{1}{\theta}.
	\end{align*}
	By Lemma~\ref{k-reg extendable}, $G$ is extendable.
\end{proof}

As a consequence of Theorem \ref{regular} and Lemma \ref{srg3} we obtain the following:
\begin{theorem}\label{regular2}
Let $G$ be a graph of order $n$ with largest Seidel eigenvalue $2\theta-1$.
Let $r =  \rk(S(G) -(2\theta-1) \mathbf{I})$. Assume that $n = \frac{r(r+1)}{2}$. Then $G$ is strongly maximal.
\end{theorem}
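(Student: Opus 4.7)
The plan is to combine Theorem~\ref{regular} with the equivalence of conditions~(1) and~(3) in Lemma~\ref{srg3}. The former says that attaining the absolute bound forces the rank identity $\rk(B_\theta(H)) = \rk(A(H) + \theta\mathbf{I})$ for every $H \in [G]$; the latter then converts this rank behaviour into non-extendability of $G$, which by Definition~\ref{dfn:max} is exactly the statement of strong maximality. So the entire proof is a direct assembly, with no new calculation beyond checking that the hypotheses of Lemma~\ref{srg3} are met.

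To set up the application of Lemma~\ref{srg3}, I first invoke the fact (already used in the proof of Theorem~\ref{regular} via \cite[Lemma~11.3.1]{godsil2013}) that under the absolute-bound hypothesis $S(G)$ has exactly two distinct eigenvalues $2\theta-1$ and $2\tau-1$, with respective multiplicities $n-r$ and $r$. I then need to verify the parameter constraint $\theta > 1 > \tau$ required by Lemma~\ref{srg3}. For $r \geq 2$ one has $n = r(r+1)/2 \geq 3$, so the elementary bound $(2\theta-1)^2 \geq \tr(S(G)^2)/n = n-1 \geq 2$ yields $\theta > 1$; the trace identity $(2\theta-1)(n-r) + (2\tau-1)r = 0$ from~\eqref{eq:Seidel rel} then forces $2\tau-1 < 0$, giving $\tau < 1$. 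The degenerate case $r = 1$ corresponds to $n = 1$ and $G = K_1$, which is trivially strongly maximal since any larger graph has largest Seidel eigenvalue at least $1 > 0$.

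With these preliminaries in hand, Theorem~\ref{regular} provides precisely the negation of condition~(3) in Lemma~\ref{srg3} for every $H \in [G]$, so by the implication (3)$\Rightarrow$(1) (in contrapositive form) we conclude that $G$ is not extendable, i.e., $G$ is strongly maximal. The main obstacle, to the extent there is one, is the parameter verification $\theta > 1 > \tau$ needed to unlock Lemma~\ref{srg3}; beyond this routine check, the conclusion is an immediate consequence of the two prior results.
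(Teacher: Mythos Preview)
Your approach is correct and is exactly the paper's: Theorem~\ref{regular2} is stated there as an immediate consequence of Theorem~\ref{regular} and Lemma~\ref{srg3}, via precisely the contrapositive of the implication (1)$\Rightarrow$(3) that you spell out.

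One small correction in your parameter check. The inequality $(2\theta-1)^2 \geq \tr(S(G)^2)/n = n-1$ is not valid in general, because $2\theta-1$ need not be the spectral radius of $S(G)$: for $r\geq 4$ one has $|2\tau-1|>2\theta-1$, and for instance at $r=7$ the left-hand side is $9$ while the right-hand side is $27$. A clean fix is to solve the two relations in~\eqref{eq:Seidel rel} with $m_\theta=n-r$, $m_\tau=r$ and $n=r(r+1)/2$, which gives $(2\theta-1)^2=r+2>1$ for all $r\geq 2$, hence $\theta>1$; then $\tau<1$ follows from the trace identity exactly as you wrote. (Also, the case $r=1$ does not actually occur: $n=1$ forces $S(G)=(0)$ and hence $r=0$, so the hypothesis $n=r(r+1)/2$ is never met with $r=1$.)
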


We see that $L(K_8)$ attains the absolute bound for $r=7$,
and by Theorem~\ref{thm:3}, it is a unique strongly maximal graph with largest Seidel eigenvalue $\lambda = 3$ up to switching.
We show a similar result for $(r,\lambda) \in \{(2,2),(3,\sqrt{5})\}$ at the end of this section.
\begin{proposition}	\label{prop:2,5}
	If a graph with largest Seidel eigenvalue $2$ (resp.\ $\sqrt{5}$) is strongly maximal,
	then it is switching equivalent to $\overline{K_3}$ (resp.\ $C_5 + K_1$).
\end{proposition}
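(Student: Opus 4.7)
I would prove both statements by showing the stronger claim that every graph $G$ with $\lambda_{\max}(S(G))=\lambda$ satisfies $|V(G)|\leq 3$ (resp.\ $\leq 6$), and that the unique switching class of that maximum order is $[\overline{K_3}]$ (resp.\ $[C_5+K_1]$). Strong maximality then falls out automatically: the extremal class attains the absolute bound $n=r(r+1)/2$ at $(r,n)=(2,3)$ or $(3,6)$ and so is strongly maximal by Theorem~\ref{regular2}, while the smaller-order candidates $[P_4]$ and $[C_5]$ are visibly extendable (to $[C_5]$ and to $[C_5+K_1]$, respectively).

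For $\lambda=2$, first enumerate the three switching classes on $4$ vertices, $[K_4]$, $[P_4]$, $[\overline{K_4}]$, whose largest Seidel eigenvalues are $1$, $\sqrt{5}$, $3$. If $|V(G)|\geq 4$, Cauchy interlacing forces every $4$-vertex principal submatrix of $S(G)$ to lie in the only class with $\lambda_{\max}(S)\leq 2$, namely $[K_4]$. Translated into two-graphs, every $4$-subset then contains $4$ odd triples, so the two-graph of $G$ consists of all triples and hence $G\in[K_n]$, giving $\lambda_{\max}(S(G))=1$, a contradiction. Thus $|V(G)|\leq 3$, and among the two three-vertex switching classes only $[\overline{K_3}]$ has $\lambda_{\max}(S)=2$.

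For $\lambda=\sqrt{5}$, I repeat the argument one level higher. Of the seven $5$-vertex switching classes only $[K_5]$ and $[C_5]$ satisfy $\lambda_{\max}(S)\leq\sqrt{5}$, so interlacing forces each $5$-sub of $G$ into one of these two classes. A mixture is impossible since a shared $4$-sub would have to lie simultaneously in the distinct classes $[K_4]$ and $[P_4]$. An all-$[K_5]$ scenario gives $G\in[K_n]$ with $\lambda_{\max}(S)=1$, so every $5$-sub must be $[C_5]$. Fixing (after switching) a $C_5$ on five vertices of $G$ and enumerating which neighborhoods of a sixth vertex keep the other four $5$-subs in $[C_5]$ leaves only the empty and the full neighborhood, both yielding $G\in[C_5+K_1]$ (the wheel $W_5$ is switching equivalent to $C_5+K_1$).

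The principal obstacle is ruling out $|V(G)|\geq 7$. Iterating the method at the $6$-subset level shows any such $G$ would, after switching, contain a $6$-vertex induced subgraph of the form $P_4+K_2$ or $P_4+2K_1$; a direct Seidel-spectrum computation (e.g.\ by exploiting the $\mathbb{Z}_2\times\mathbb{Z}_2$-symmetry to block-diagonalize $S$) yields $\lambda_{\max}(S)>\sqrt{5}$ in both cases, contradicting interlacing. A cleaner alternative avoids this explicit calculation by applying Lemma~\ref{srg3} in the two-Seidel-eigenvalue case (where the required s.r.g.\ degree $(5+\sqrt{5})/2$ is non-integer, forcing $n=6$ and strong maximality) and then handling graphs with three or more distinct Seidel eigenvalues by a separate extendability argument.
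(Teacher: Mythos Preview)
Your argument is correct, but it takes a markedly different route from the paper. The paper's proof is essentially a two-liner: since $\lambda(7)=(-3+\sqrt{65})/2\approx 2.53$ exceeds both $2$ and $\sqrt{5}$, every graph with largest Seidel eigenvalue $2$ or $\sqrt{5}$ that is not switching equivalent to a complete graph has order at most $6$; one then inspects the (tabulated) switching classes on at most six vertices. You instead rebuild this order bound from scratch by interlacing at the $4$- and $5$-vertex levels and propagating the constraints upward. Your approach is more elementary and self-contained---it does not presuppose the value of $\lambda(7)$---but it costs genuine work, particularly for $\lambda=\sqrt{5}$. The paper's approach is slicker precisely because $\lambda(7)$ already packages the small-case enumeration into a single inequality.

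Your handling of the $n\geq 7$ obstruction for $\lambda=\sqrt{5}$ is the least polished part. The direct spectral check for $P_4+K_2$ and $P_4+2K_1$ is only sketched, and your ``cleaner alternative'' is phrased a bit awkwardly: you do not actually need a separate extendability argument for graphs with three or more Seidel eigenvalues. Once you have shown (as you do) that every $6$-subset of such a $G$ lies in $[C_5+K_1]$, any $G$ of order $\geq 7$ with $\lambda_{\max}(S(G))=\sqrt{5}$ would witness $C_5+K_1$ as extendable, directly contradicting the strong maximality of $C_5+K_1$ that you obtain from Lemma~\ref{srg3} via the non-integrality of $(n-2\tau)/2=(5+\sqrt{5})/2$. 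That single observation closes the loop. (Two minor slips: there are five, not four, remaining $5$-subsets once you fix a $C_5$ inside six vertices; and the ``mixture impossible'' step implicitly uses connectivity of the Johnson graph $J(n,5)$ to find two $5$-subsets of different types sharing a common $4$-subset.)
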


The only other graph known to attain the absolute bound (for $r=23$) is the disjoint union of the McLaughlin graph and $K_1$.
We pose the following questions.
\begin{question}
	Is a strongly maximal Seidel matrix with largest eigenvalue $5$ unique up to switching?
\end{question}

\begin{question}
	Does there exist a strongly maximal graph with largest Seidel eigenvalue $2t+1$ where $t \geq 1$ is an integer?
\end{question}

We remark that Proposition~\ref{prop:empty} asserts that for every positive integer $t$, the empty graph $\overline{K_{2t+1}}$ is a strongly maximal graph with largest Seidel eigenvalue $2t$.

\begin{definition}
	Let $n$ be an integer at least $3$, and let $\lambda(n)$ be the minimum value of the largest  Seidel eigenvalues of graphs of order $n$ not switching equivalent to a complete graph.
\end{definition}
As a direct consequence of this definition, the sequence $(\lambda(n))_{n=3}^{\infty}$ is weakly increasing.
In other words, if a graph has largest Seidel eigenvalue less than $\lambda(n)$,
then it is of order less than $n$ or switching equivalent to a complete graph.	
Since the graphs of small orders are easily determined up to switching (see~\cite[TABLE~4.1 and TABLE~4.2]{Lint1966}),
we can verify that $\lambda(3)=2$, $\lambda(4) = \lambda(5)= \lambda(6)=\sqrt{5}$ and $\lambda(7)=(-3+\sqrt{65})/2 \approx 2.53$.
Using the value $\lambda(7)$, we can show Proposition~\ref{prop:2,5} as follows.

\begin{proof}[Proof of Proposition~\ref{prop:2,5}]
	Every graph with largest Seidel eigenvalue less than $\lambda(7) \approx 2.53$ is of order at most $6$ or switching equivalent to a complete graph.
	Hence every strongly maximal graph with largest Seidel eigenvalue $2$ or $\sqrt{5}$ is of order at most $6$.
	Checking the graphs of order at most $6$, we obtain the desired result.
\end{proof}

Note that $\lambda(n) \leq 3$ since $L(K_{2,n})$ has largest Seidel eigenvalue $3$ (see Theorem~\ref{thm:3}).
In the next section, we will discuss the behavior of $\lambda(n)$ to study strongly maximal graphs with largest Seidel eigenvalue less than $3$.

\section{Infinitely many strongly maximal graphs}	\label{sec:ex}

In this section, we discuss the existence of infinitely many strongly maximal graphs with largest Seidel eigenvalue less than $3$,
and provide two families of infinitely many strongly maximal graphs with unbounded largest Seidel eigenvalue.

\subsection{Strongly maximal graphs with largest Seidel eigenvalue less than $3$}
We determined strongly maximal graphs with largest Seidel eigenvalue $\lambda \in \{2,\sqrt{5},3\}$ in Sections~\ref{sec:3} and \ref{sec:strong_max}.
In this subsection, we show the following propositions to treat the case of $\lambda < 3$ more thoroughly.

Recall that for an integer $n \geq 3$, the real number $\lambda(n)$ is the minimum value of the largest  Seidel eigenvalues of graphs of order $n$ not switching equivalent to a complete graph.
By investigating the behavior of the sequence $(\lambda(n))_{n=3}^\infty$, the existence of infinitely many strongly maximal graphs with largest Seidel eigenvalue less than $3$ is derived.

\begin{proposition}	\label{prop:lamda}
	For every integer $n \geq 3$, the value $\lambda(n)$ is less than $3$.
	Furthermore, the sequence $(\lambda(n))_{n=3}^\infty$ converges to $3$.
\end{proposition}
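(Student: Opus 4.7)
The plan is to prove the strict inequality and the convergence separately.

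For the strict inequality $\lambda(n) < 3$ at every $n \geq 3$, I would take $G_n := K_n - e$, the complete graph on $n$ vertices with one edge $e = \{u,v\}$ removed. Writing $S(G_n) = I - J + 2(e_u e_v^{\top} + e_v e_u^{\top})$ and decomposing $\mathbb{R}^n$ into the invariant subspaces $\mathrm{span}\{\mathbf{j},\,e_u + e_v\}$, $\mathrm{span}\{e_u - e_v\}$ and their orthogonal complement (of dimension $n-3$), I can read off the Seidel spectrum explicitly: eigenvalue $1$ with multiplicity $n - 3$, eigenvalue $-1$, and the two roots of a $2 \times 2$ block whose larger root is
\[
\mu_n := \frac{(4 - n) + \sqrt{n^2 + 4n - 12}}{2}.
\]
The equation $\mu_n = 3$ reduces to $-12 = 4$, so $\mu_n < 3$ for every $n \geq 3$, and $K_n - e$ is not switching equivalent to $K_n$ since their Seidel spectra differ (the spectrum of $S(K_n)$ is $\{1^{n-1}, -(n-1)\}$ while that of $S(K_n - e)$ contains $\mu_n > 1$ and $-1$). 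Hence $\lambda(n) \leq \mu_n < 3$.

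For convergence, the same computation gives $\mu_n = 3 - 4/n + O(1/n^2) \to 3$, but this only provides an upper bound on $\lambda(n)$ and, together with the monotonicity and the boundedness already noted, gives $\lim_n \lambda(n) \leq 3$. The non-trivial half is to show the limit is not strictly less than $3$. I would argue by contradiction: assume $\lim_n \lambda(n) = L < 3$ and set $\theta := (L+1)/2 < 2$. Then for every $n$ there is a non-trivial graph $G_n$ of order $n$ whose largest Seidel eigenvalue is at most $L$, and Theorem~\ref{main} gives the uniform strict lower bound $\lambda_{\min}(A(G_n)) \geq -\theta > -2$ on the smallest adjacency eigenvalue of $G_n$.

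The main obstacle is then to derive a contradiction from this uniform strict inequality combined with $|V(G_n)| \to \infty$ and the non-triviality of each $G_n$. I would invoke the Cameron--Goethals--Seidel--Shult classification: each connected component of $G_n$ is either one of a finite list of exceptional graphs from the $E_8$ root system, or a generalized line graph; the strict inequality $-\theta > -2$ further restricts the ordinary-line-graph case to $L(H)$ for $H$ either a tree or a non-bipartite unicyclic graph. A direct computation for each non-complete family in this list (for example $\lambda_{\min}(L(P_m)) = -2\cos(\pi/m)$, with analogous limits for line graphs of double stars and of triangles with pendants) shows that $\lambda_{\min}$ tends to $-2$ as the order of the underlying $H$ grows, so the hypothesis $\lambda_{\min}(G_n) \geq -\theta$ uniformly bounds the order of each non-complete component in terms of $\theta$. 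The sole exception is the infinite family $\{K_m\}_{m\geq 1}$ of complete-graph components (with $\lambda_{\min} = -1$); combining such components with the bounded non-complete ones, a separate analysis of the Seidel spectrum of the disjoint union (via its equitable partition into connected components) shows that non-triviality of $G_n$ together with largest Seidel eigenvalue $\leq L < 3$ forces the number of components and hence $n$ to be bounded in terms of $\theta$, yielding the required contradiction.
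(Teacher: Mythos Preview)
Your argument for the upper bound $\lambda(n)<3$ is correct and essentially coincides with the paper's. In fact your witness $K_n-e$ is switching equivalent to the paper's graph $\hat{K}_{n-1}$: switching $K_n-e$ with respect to $\{u\}$ (where $e=\{u,v\}$) leaves $K_{n-1}$ on $V\setminus\{u\}$ with the pendant $u$ attached to $v$. So the two proofs of the strict inequality are the same up to this switching.

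The convergence argument, however, has a genuine gap. Your key step is the claim that for connected graphs with $\lambda_{\min}>-2$, every non-complete family has $\lambda_{\min}\to-2$ as the order grows, so that $\lambda_{\min}(G_n)\geq-\theta>-2$ would bound the size of every non-complete component. This claim is false. The graph $\hat{K}_m=L(T_m)$ (equivalently the line graph of the double star $D_{m-1,1}$) is connected, non-complete, a line graph of a tree, and has
\[
\lambda_{\min}(\hat{K}_m)\longrightarrow \frac{-1-\sqrt{5}}{2}\approx -1.618 \qquad (m\to\infty),
\]
not $-2$. (One sees this from the equitable partition $\{1,\dots,m-1\},\{m\},\{m+1\}$: the non-quotient eigenvalue is $-1$, and the cubic $x^3-(m-2)x^2-mx+(m-2)$ has its smallest root tending to $(-1-\sqrt{5})/2$.) Thus ``$\lambda_{\min}\geq -\theta$'' by itself does not bound the order of non-complete components, and your step~3 fails. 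Ironically, $\hat{K}_m$ is in the switching class of your own witness $K_{m+1}-e$. Repairing the argument would require classifying \emph{all} large connected graphs with $\lambda_{\min}$ bounded away from $-2$ (there are many such families besides $K_m$), and then controlling the Seidel spectrum of their disjoint unions with complete graphs; this is substantially more than ``a separate analysis via equitable partitions.''

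The paper avoids this entirely. It uses the root-lattice embedding (Lemma~\ref{lem:reduc_to_L}) to show that any graph $G$ of order $n\geq 8$ with largest Seidel eigenvalue in $(1,3)$ sits, up to switching, inside $L(K_{2,n-1})$; a pigeonhole count on the $n$ edges of $K_{2,n-1}$ then forces $G$ to contain $\hat{K}_{\lceil n/2\rceil}$ as an induced subgraph (Lemma~\ref{lem:GhasT}). Interlacing together with the explicit estimate $\lambda_{\max}(S(\hat{K}_m))>3-4/m$ (Lemma~\ref{lem:eigenT}) then gives directly $\lambda(n)>3-8/n$.
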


\begin{proposition}	\label{prop:13}
	For each real number $\lambda$ in the open interval $(1,3)$, the number of graphs with the largest Seidel eigenvalue $\lambda$ is finite.
	In particular, if $\lambda$ is the largest Seidel eigenvalue of a graph, then there exists a strongly maximal graph with largest Seidel eigenvalue $\lambda$.
\end{proposition}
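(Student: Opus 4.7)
The plan is to derive both statements from Proposition~\ref{prop:lamda} together with the definition of $\lambda(n)$. Fix $\lambda \in (1,3)$. By Proposition~\ref{prop:lamda} and the monotonicity of the sequence $(\lambda(n))_{n \geq 3}$ noted just after its definition, there is an integer $N \geq 3$ with $\lambda(n) > \lambda$ for every $n \geq N$. A graph switching equivalent to a complete graph has largest Seidel eigenvalue at most $1 < \lambda$, so any graph with largest Seidel eigenvalue exactly $\lambda$ is not switching equivalent to a complete graph. Consequently, by the definition of $\lambda(n)$, if $G$ is a graph with largest Seidel eigenvalue $\lambda$ and $|V(G)| = n \geq N$, then the largest Seidel eigenvalue of $G$ would be at least $\lambda(n) > \lambda$, a contradiction. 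Hence every such graph has order strictly less than $N$, and as there are only finitely many graphs of bounded order (up to isomorphism), the first statement follows.

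For the ``in particular'' claim, assume $\lambda$ is the largest Seidel eigenvalue of some graph. Then the set of graphs with largest Seidel eigenvalue exactly $\lambda$ is non-empty and finite by the first statement; choose a member $G^*$ of maximum order. Were $G^*$ extendable, Definition~\ref{dfn:max} would supply a Seidel matrix $S'$ with largest eigenvalue $\lambda$ containing $S(G^*)$ as a proper principal submatrix, equivalently a graph $G'$ having $G^*$ (after switching) as a proper induced subgraph, with largest Seidel eigenvalue $\lambda$ and $|V(G')| > |V(G^*)|$. This contradicts the choice of $G^*$, so $G^*$ is strongly maximal.

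The main obstacle does not lie in the present proof but in Proposition~\ref{prop:lamda} itself: once the convergence $\lambda(n) \to 3$ is established, the statement above is a short pigeonhole-style consequence. The only minor verifications are that Definition~\ref{dfn:max} is switching-invariant, so one may pass freely between Seidel matrices and graphs, and that the degenerate cases $|V(G)| \leq 2$ are automatically excluded because all such graphs are switching equivalent to complete graphs and thus have largest Seidel eigenvalue at most $1$.
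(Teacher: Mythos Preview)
Your proof is correct and follows essentially the same route as the paper: invoke Proposition~\ref{prop:lamda} to obtain some $n$ with $\lambda < \lambda(n)$, then use the defining property of $\lambda(n)$ (together with the fact that graphs switching equivalent to complete graphs have largest Seidel eigenvalue at most~$1$) to bound the order of any graph with largest Seidel eigenvalue~$\lambda$. The paper leaves the ``in particular'' clause implicit, but your maximum-order argument is exactly the intended one.
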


To prepare for the proof of these propositions, we introduce a graph $\hat{K}_n$,
which is the graph on $n+1$ vertices consisting of a complete graph $K_n$ with one extra edge.
In other words, this is the line graph of the graph $T_n$ obtained by attaching a new vertex to a leaf of the claw $K_{1,n}$.

\begin{lemma}	\label{lem:eigenT}
	For an integer $n \geq 2$, the largest Seidel eigenvalue of $\hat{K}_n$ is in the open interval $(3-4/n,3)$.
\end{lemma}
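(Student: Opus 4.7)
The plan is to exploit the symmetry of $\hat{K}_n$ via an equitable partition of its vertex set, reduce the eigenvalue problem for $S(\hat{K}_n)$ to a $3\times 3$ matrix, and then check both desired inequalities by squaring. First I would label the vertices so that $\{1,\dots,n\}$ span the clique $K_n$ and vertex $n+1$ is the pendant adjacent only to vertex $n$. Then the partition $\{1,\dots,n-1\}\sqcup\{n\}\sqcup\{n+1\}$ is equitable for the Seidel matrix, and one reads off the $3\times 3$ quotient matrix $Q$ directly from the Seidel entries.

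Next I would observe that any vector supported on $\{1,\dots,n-1\}$ with zero coordinate-sum is an eigenvector of $S(\hat{K}_n)$ with eigenvalue $1$, because the top-left block of $S(\hat{K}_n)$ is $\bI-\bJ$ and the remaining two columns vanish on such vectors; this accounts for an $(n-2)$-dimensional eigenspace. The three remaining eigenvalues are precisely the spectrum of $Q$, and a routine expansion factors $\det(\lambda \bI-Q)$ as $(\lambda+1)\bigl(\lambda^2+(n-3)\lambda-(3n-4)\bigr)$. The quadratic factor has roots $\bigl((3-n)\pm\sqrt{(n-1)(n+7)}\bigr)/2$, and a quick sign check (e.g.\ evaluation at $n=2$, or noting that the discriminant exceeds $(n-3)^2$) shows that the larger root $\lambda^+$ exceeds $1$ for $n\ge 2$, so $\lambda^+$ is indeed the largest Seidel eigenvalue of $\hat{K}_n$.

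It then remains to verify the two inequalities. The upper bound $\lambda^+ < 3$ is equivalent to $\sqrt{(n-1)(n+7)} < n+3$; both sides are positive, and squaring reduces this to the trivial $-7 < 9$. The lower bound $\lambda^+ > 3 - 4/n$, after clearing the factor $1/2$ and isolating the radical, becomes $n\sqrt{(n-1)(n+7)} > n^2+3n-8$; both sides are positive for $n\ge 2$, and squaring reduces it to $n^2(n-1)(n+7) > (n^2+3n-8)^2$, which simplifies to $48n>64$.

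There is no significant obstacle; the approach is essentially one symmetry reduction followed by arithmetic. The only places where a little care is needed are confirming the positivity of both sides before each squaring step, and checking that $\lambda^+$ dominates the spurious eigenvalues $1$, $-1$ and $\lambda^-$, all of which are immediate consequences of the explicit formula.
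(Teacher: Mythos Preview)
Your proof is correct and follows essentially the same route as the paper: reduce via the equitable partition $\{1,\dots,n-1\}\sqcup\{n\}\sqcup\{n+1\}$ to a $3\times3$ quotient matrix, factor out $(\lambda+1)$, and bound the larger root of the remaining quadratic (the paper checks the sign of that quadratic at the two endpoints instead of solving and squaring, which is equivalent). Incidentally, your quadratic $\lambda^{2}+(n-3)\lambda-(3n-4)$ is the correct one; the paper's printed $x^{2}+(n-2)x-3n+1$ is a typo, as one sees already at $n=2$ where the Seidel spectrum of $\hat{K}_2=P_3$ is $\{2,-1,-1\}$.
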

\begin{proof}
	Let $n$ be an integer at least $2$.
	We write $V(\hat{K}_n)=\{1,\ldots,n+1\}$ and $E(\hat{K}_n) = \{\{i,j\} \mid 1 \leq i < j \leq n \} \cup \{\{n,n+1\}\}$.
	Then the quotient matrix of $S(\hat{K}_n)$ with respect to an equitable partition $\{\{1,\ldots,n-1\},\{n\},\{n+1\}\}$ is
	\[
		\begin{pmatrix}
			2-n & -1 & 1 \\
			1-n & 0 &-1 \\
			n-1 & -1 & 0
		\end{pmatrix}
	\]
	Hence the characteristic polynomial of $S(\hat{K}_n)$ is
	$
		(x-1)^{n-2}(x+1)f(x),
	$
	where $f(x) := x^2 + (n-2)x - 3n+1$.
	Since $f(3-4/n) < 0$ and $f(3) > 0$,
	the desired result holds.
\end{proof}

\begin{lemma}	\label{lem:GhasT}
	Let $n$ be an integer at least $8$.
	If a graph has largest Seidel eigenvalue in the open interval $(1,3)$,
	then it contains $\hat{K}_{\lceil n/2 \rceil}$ as an induced subgraph up to switching.
\end{lemma}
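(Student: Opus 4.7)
The plan is to pass from $G$ to its cone $\tilde G$ via Corollary~\ref{cor:main} and invoke the classification of graphs whose smallest adjacency eigenvalue is strictly greater than $-2$. Since the largest Seidel eigenvalue of $G$ lies in $(1,3)$, the number $3$ is not a Seidel eigenvalue of $G$, so $\rk(3\bI-S(G))=n$; by Corollary~\ref{cor:main} this gives $\rk(A(\tilde G)+2\bI)=n+1$, so $A(\tilde G)+2\bI$ is positive definite and the $n+1$ norm-$\sqrt{2}$ representing vectors of $\tilde G$ are linearly independent. By the Cameron--Goethals--Seidel--Shult theorem, a connected graph with smallest adjacency eigenvalue at least $-2$ is either a generalized line graph or one of the exceptional graphs represented in $\sE_8$; the exceptional case would give $|V(\tilde G)|\leq 8$ by linear independence, contradicting $|V(\tilde G)|=n+1\geq 9$. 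Hence $\tilde G=L(H;b_1,\ldots,b_s)$ for a connected graph $H$ and non-negative integers $b_i$.

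Next I would analyze the apex. Since $*$ is adjacent to every other vertex of $\tilde G$ and each petal vertex has its partner as a non-neighbor, $*$ cannot be a petal vertex, so it corresponds to an edge $e_0=\{u,v\}\in E(H)$ meeting every other edge of $H$; moreover every petal is placed at $u$ or $v$. A dimension count in the ambient space of dimension $|V(H)|+\sum_i b_i$ against $n+1=|E(H)|+2\sum_i b_i$ linearly independent vectors forces $|E(H)|\leq|V(H)|-\sum_i b_i$. With $H$ connected (so $|E(H)|\geq|V(H)|-1$) this gives $\sum_i b_i\leq 1$, and an even cycle in $H$ would produce an alternating-sum dependence among edge vectors, so any cycle of $H$ is odd. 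Combining with the constraint on $e_0$, the surviving configurations are: (a) $H$ is a double star with central edge $e_0$ and no petals; (b) $H$ is the same double star with a single petal at one endpoint of $e_0$; (c) $H$ is a triangle $\{u,v,w\}$ with $e_0=\{u,v\}$ together with leaves at $u$ and at $v$, and no petals. Direct inspection shows that in case (a), $G=K_p+K_q$ (where $p,q$ count the leaves at $u,v$), which is switching equivalent to $K_n$ with largest Seidel eigenvalue $1$, contradicting $\lambda>1$. In cases (b) and (c), writing $p,q$ for the leaf counts so that $n=p+q+2$, the graph $G$ is switching equivalent to the graph obtained from two cliques of sizes $p+1$ and $q+1$ by joining one vertex of each by a single bridge edge.

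In this ``joined cliques'' graph, taking $p\geq q$ without loss of generality so that $p+1\geq\lceil n/2\rceil=m$, and choosing the larger clique $C_1$ together with the endpoint of the bridge lying in the smaller clique $C_2$ gives an induced copy of $\hat K_{p+1}$; restricting further to the bridge endpoint in $C_1$, the bridge endpoint in $C_2$, and $m-1$ additional vertices of $C_1$ yields $\hat K_m$ as an induced subgraph of the appropriate switching of $G$, as required. The main obstacle is the dimension-counting argument used to reduce the generalized line graph structure of $\tilde G$ to the three explicit configurations (a)--(c); once this reduction is established, the remainder is a short direct verification.
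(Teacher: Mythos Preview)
Your argument is correct and follows a genuinely different route from the paper's. The paper leans on the root-lattice machinery of Section~\ref{sec:3}: it invokes Lemma~\ref{lem:reduc_to_L} to realize $G$ (up to switching) as an induced subgraph of some $L\in[\Lambda(G)]$, then uses Lemma~\ref{lem:rk} together with the classification of irreducible root lattices of rank $n+1\geq 9$ to force $\Lambda(G)\cong\sD_{n+1}$, whence $[L]=[L(K_{2,n-1})]$ by Lemma~\ref{lem:SC}; the finish is a short pigeonhole on the $n$ selected edges of $K_{2,n-1}$. You instead bypass the lattice language entirely: from Corollary~\ref{cor:main} you extract that $\tilde G$ has smallest eigenvalue strictly greater than $-2$, apply Cameron--Goethals--Seidel--Shult directly, rule out the $\sE_8$ case by linear independence, and then pin down the generalized-line-graph structure of $\tilde G$ via the apex constraint combined with the dimension inequality $|E(H)|+\sum b_i\leq|V(H)|$. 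Your approach is more self-contained (it does not need Lemmas~\ref{lem:SC} or~\ref{lem:reduc_to_L}) and in fact proves something sharper than required: that $G$ is switching equivalent precisely to two cliques joined by a bridge, rather than merely to some induced subgraph of $L(K_{2,n-1})$. The paper's route trades this extra precision for brevity, since the embedding lemma and the description of $[\sD_{n+1}]$ have already been established for other purposes.
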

\begin{proof}
	Let $G$ be a graph with largest Seidel eigenvalue in the open interval $(1,3)$.
	Applying Lemma~\ref{lem:reduc_to_L} and setting $\sL := \Lambda(G)$,
	we obtain a supergraph $L \in [\sL]$ of $G$.
	Then, by Lemma~\ref{lem:rk}, we have $\rk \sL = \rk(3\bI - S(G))+1 = n+1$.
	Since $\sL$ is an irreducible root lattice of rank $n+1 \geq 9$,
	it is isometric to either $\sA_{n+1}$ or $\sD_{n+1}$.
	In addition, the largest Seidel eigenvalue of $L$ is greater than $1$ by~\cite[Corollary~2.5.2]{brouwer2011spectra}.
	Hence Lemma~\ref{lem:SC} implies that $\sL$ is of type $D$ and $[\sL] = [L(K_{2,n-1})]$ holds.
	Since this implies that $[L] = [L(K_{2,n-1})]$,
	without loss of generality we may assume that $G$ is an induced subgraph of $L(K_{2,n-1})$.
	Then the $n$ vertices of $G$ correspond to $n$ edges of $K_{2,n-1}$.
	Hence we find that the graph induced by these edges has an induced subgraph isomorphic to $T_{\lceil n/2 \rceil}$.
	Therefore $G$ has an induced subgraph isomorphic to $\hat{K}_{\lceil n/2 \rceil} = L(T_{\lceil n/2 \rceil})$.
\end{proof}

\begin{proof}[{Proof of Proposition~\ref{prop:lamda}}]
	Let $n$ be an integer at least $8$.
	By Lemma~\ref{lem:eigenT}, the largest Seidel eigenvalue of $L(T_{n-1})$ is less than $3$.
	Hence so is $\lambda(n)$.
	
	Next we take a graph $G$ of order $n$ having largest Seidel eigenvalue $\lambda(n)$.
	Then since $n \geq 8$ and $\lambda(n) < 3$,
	Lemma~\ref{lem:GhasT} implies that $G$ contains $\hat{K}_{\lceil n/2 \rceil}$ as an induced subgraph up to switching.
	By~\cite[Corollary~2.5.2]{brouwer2011spectra}, the largest Seidel eigenvalue of $G$ is at least that of $\hat{K}_{\lceil n/2 \rceil}$.
	Hence by Lemma~\ref{lem:eigenT}, we have
	$
		\lambda(n) > 3-4/\lceil n/2 \rceil \geq 3 - 8/n.
	$
\end{proof}

\begin{proof}[Proof of Proposition~\ref{prop:13}]
	We fix a real number $\lambda \in (1,3)$.
	By Proposition~\ref{prop:lamda}, there exists an integer $n \geq 8$ such that $\lambda < \lambda(n)$.
	Recall that a graph with largest Seidel eigenvalue less than $\lambda(n)$ is of order less than $n$ or switching equivalent to a complete graph.
	Since any complete graph has largest Seidel eigenvalue $1$,
	we see that every graph with largest Seidel eigenvalue $\lambda$ is of order less than $n$.
	This implies the desired conclusion.
\end{proof}

Recall that Propositions~\ref{prop:lamda} and \ref{prop:13} provide infinitely many strongly maximal graphs with largest eigenvalue less than $3$.
By the proof of Proposition~\ref{prop:2,5}, we can determine the strongly maximal graphs with largest Seidel eigenvalue in $\{2,\sqrt{5},(-1+\sqrt{33})/2,-1+2\sqrt{3}\} \subset (1,3)$.
However for any largest Seidel eigenvalue $\lambda \in (1,3)$ except these four values, we were not able to determine the strongly maximal graphs with largest Seidel eigenvalue $\lambda$.
\subsection{Strongly maximal graphs with unbounded largest Seidel eigenvalue}
We have discussed the existence of strongly maximal graphs with largest Seidel eigenvalue in $(1,3] \cup \{5\}$.
In this subsection, we provide two families of infinitely many strongly maximal graphs with unbounded largest Seidel eigenvalue, each of which has exactly two Seidel eigenvalues.
\begin{lemma}	\label{lem:spec_ext}
	Let $G$ be a graph of order $n$ with exactly two Seidel eigenvalues $\lambda$ and $\mu$ with respective multiplicities $m(\lambda)$ and $m(\mu)$, where $\lambda>\mu$.
	Let $H$ be its proper supergraph of order $n+1$ with largest Seidel eigenvalue $\lambda$.
	Then the Seidel spectrum of $H$ is $\{ [\lambda]^{m(\lambda)}, [\mu]^{m(\mu)-1} ,\theta,\tau \}$
	where
	\begin{align}
		\theta + \tau = \mu \quad\text{ and }\quad
		\theta\tau = -n. \label{lem:spec_ext:1}
	\end{align}
\end{lemma}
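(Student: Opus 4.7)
The plan is to combine Cauchy's interlacing theorem with the two simplest trace identities, $\tr S(H) = 0$ and $\tr S(H)^2 = (n+1)n$, to pin down the Seidel spectrum of $H$. Since $G$ is an induced subgraph of $H$, the matrix $S(G)$ is an $n \times n$ principal submatrix of $S(H)$, so Cauchy's theorem gives direct control on how the eigenvalues must interlace.

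First I would order the eigenvalues of $S(H)$ as $\alpha_1 \geq \cdots \geq \alpha_{n+1}$ and those of $S(G)$ as $\beta_1 \geq \cdots \geq \beta_n$, so that $\beta_1 = \cdots = \beta_{m(\lambda)} = \lambda$ and $\beta_{m(\lambda)+1} = \cdots = \beta_n = \mu$. Cauchy interlacing gives $\alpha_i \geq \beta_i \geq \alpha_{i+1}$ for $1 \leq i \leq n$. Combined with the hypothesis that $H$ has largest Seidel eigenvalue $\lambda$, i.e.\ $\alpha_1 = \lambda$, chaining these inequalities forces $\alpha_1 = \cdots = \alpha_{m(\lambda)} = \lambda$ and $\alpha_{m(\lambda)+2} = \cdots = \alpha_n = \mu$, while $\alpha_{m(\lambda)+1} \in [\mu,\lambda]$ and $\alpha_{n+1} \leq \mu$. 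Setting $\theta := \alpha_{m(\lambda)+1}$ and $\tau := \alpha_{n+1}$ gives the claimed shape of the spectrum.

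Next I would use the trace identities. From $\tr S(G) = m(\lambda)\lambda + m(\mu)\mu = 0$ and $\tr S(H) = m(\lambda)\lambda + (m(\mu)-1)\mu + \theta + \tau = 0$, subtracting yields $\theta + \tau = \mu$. For the second relation in~\eqref{lem:spec_ext:1}, every Seidel matrix of order $k$ has zero diagonal and $\pm 1$ off-diagonal entries, so $\tr S(G)^2 = n(n-1)$ and $\tr S(H)^2 = (n+1)n$. Expanding these as sums of squared eigenvalues and taking the difference gives $\theta^2 + \tau^2 - \mu^2 = 2n$. Substituting $\mu = \theta + \tau$ together with $(\theta+\tau)^2 = \theta^2 + 2\theta\tau + \tau^2$ then yields $\theta\tau = -n$.

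The only delicate point is the interlacing bookkeeping in the boundary cases $m(\mu) = 1$ or $m(\lambda) = 1$, where one of the strings $[\lambda]^{m(\lambda)}$ or $[\mu]^{m(\mu)-1}$ in the spectrum of $H$ becomes empty; these are routine edge cases, and the two ``free'' eigenvalues $\theta$ and $\tau$ are still isolated correctly by the same interlacing argument. I do not anticipate any substantive obstacle beyond this.
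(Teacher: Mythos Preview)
Your proof is correct and follows essentially the same approach as the paper: Cauchy interlacing to force the multiplicities of $\lambda$ and $\mu$ in $S(H)$ to be at least $m(\lambda)$ and $m(\mu)-1$, followed by the trace identities $\tr S(G)=\tr S(H)=0$, $\tr S(G)^2=n(n-1)$, $\tr S(H)^2=n(n+1)$ to pin down $\theta+\tau$ and $\theta\tau$. The paper's version is simply terser and leaves the interlacing bookkeeping and the arithmetic with the trace identities implicit.
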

\begin{proof}
	By~\cite[Corollary~2.5.2]{brouwer2011spectra}, the eigenvalues of $S(G)$ interlace those of $S(H)$.
	Hence we see that $\lambda$ and $\mu$ are Seidel eigenvalues of $H$ whose multiplicities are at least $m(\lambda)$ and $m(\mu)-1$, respectively.
	By $\tr S(G) = \tr S(H) = 0$, $\tr S(G)^2 = n(n-1)$ and $\tr S(H)^2 = n(n+1)$,
	the desired conclusion follows.
\end{proof}

The following proposition gives infinitely many strongly maximal graphs with exactly two Seidel eigenvalues, which are irrational numbers.

\begin{proposition}	\label{prop:relative_ir}
	Let $G$ be a graph with exactly two Seidel eigenvalues.
	If a Seidel eigenvalue of $G$ is not an integer, then $G$ is strongly maximal.
\end{proposition}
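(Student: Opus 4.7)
My plan is to argue by contradiction. Suppose $G$ (of order $n$) is extendable; using eigenvalue interlacing, I may reduce to the case where there is an induced supergraph $H$ of $G$ with $|V(H)| = n+1$ and largest Seidel eigenvalue $\lambda$. Lemma~\ref{lem:spec_ext} then pins down the Seidel spectrum of $H$ as $\{[\lambda]^{m(\lambda)}, [\mu]^{m(\mu)-1}, \theta, \tau\}$ with $\theta + \tau = \mu$ and $\theta\tau = -n$.

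The engine of the proof will be integrality: $S(G)$ and $S(H)$ have integer entries, so their characteristic polynomials belong to $\mathbb{Z}[x]$, whence all eigenvalues are algebraic integers and any two Galois-conjugate eigenvalues must occur with equal multiplicity. Assume without loss of generality that $\lambda$ is the non-integer (hence irrational algebraic integer) eigenvalue. Its minimal polynomial over $\mathbb{Q}$ has degree at least $2$ and all its roots must already appear in the spectrum of $S(G)$; but $S(G)$ has only two distinct eigenvalues, so this minimal polynomial has degree exactly $2$ with other root $\mu$. The equal-multiplicity principle applied to $S(G)$ then forces $m(\lambda) = m(\mu) = n/2$.

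Next I would apply the same Galois/equal-multiplicity principle to $S(H)$: since $\lambda$ and $\mu$ remain conjugates, their multiplicities in the displayed spectrum must coincide. Starting from contributions $n/2$ for $\lambda$ and $n/2-1$ for $\mu$, the two ``extra'' eigenvalues $\theta, \tau$ must supply one more occurrence to $\mu$ than to $\lambda$. A short case check over whether $\theta, \tau$ coincide with $\lambda$, $\mu$, each other, or neither shows that the only allowable configuration is $\theta = \mu$ with $\tau \notin \{\lambda, \mu\}$ (up to swapping). Then $\theta + \tau = \mu$ forces $\tau = 0$, contradicting $\theta\tau = \mu\cdot 0 = 0 \ne -n$.

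I expect the main delicate point to be precisely that final case analysis on $(\theta,\tau)$; once the Galois observation is in hand, every other step (interlacing, reduction to an order-$(n+1)$ extension, the structure of the spectrum via Lemma~\ref{lem:spec_ext}) is mechanical.
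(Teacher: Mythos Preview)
Your proposal is correct and follows essentially the same route as the paper: both use Lemma~\ref{lem:spec_ext} to describe the spectrum of a one-vertex extension $H$, invoke the Galois/equal-multiplicity principle to force $m(\lambda)=m(\mu)=n/2$ in $G$ and to conclude that one of $\theta,\tau$ must equal $\mu$, and then derive $0=-n$ from \eqref{lem:spec_ext:1}. The paper compresses your case analysis into a single ``without loss of generality $\mu=\tau$'', but the underlying argument is identical.
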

\begin{proof}
	Let $\lambda$ and $\mu$ be the Seidel eigenvalues of $G$ with $\lambda > \mu$, and $n$ the order of $G$.
	Then, since two Seidel eigenvalues of $G$ are algebraically conjugate, the Seidel spectrum of $G$ is $\{[\lambda]^{n/2},[\mu]^{n/2}\}$.
	By way of contradiction, we assume that $G$ is extendable.
	Namely, there exists a supergraph $H$ of $G$ such that its order is $n+1$ and its largest Seidel eigenvalue is $\lambda$.
	By~Lemma~\ref{lem:spec_ext}, the Seidel spectrum of $S(H)$ is $\{[\lambda]^{n/2},[\mu]^{n/2-1},\theta,\tau\}$ for some $\theta$ and $\tau$.
	Since $\lambda$ and $\mu$ are algebraically conjugate, without loss of generality we may assume that $\mu=\tau$.
	This is impossible by~\eqref{lem:spec_ext:1}.
\end{proof}

\begin{example}
	Let $q$ be a prime power congruent to $1$ modulo $4$, and let $P(q)$ denote the Paley graph of order $q$.
	Then the Seidel spectrum of $P(q)+K_1$ is $\left\{[\pm\sqrt{q}]^{(q+1)/2}\right\}$.
	 If $q$ is not a square, then we may apply Proposition~\ref{prop:relative_ir} to $P(q)+K_1$, and conclude that $P(q) + K_1$ is strongly maximal.
\end{example}

Next the following proposition gives infinitely many strongly maximal graphs with exactly two Seidel eigenvalues, which are integers.

\begin{proposition}	\label{prop:empty}
	For a positive integer $n$, the empty graph $\overline{K}_n$ is extendable if and only if $n$ is even.
\end{proposition}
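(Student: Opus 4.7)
The plan is to apply Lemma~\ref{extendable}. Since $S(\overline{K}_n)=\bJ_n-\bI_n$ has largest eigenvalue $n-1$, we have $\theta=n/2$, so the threshold $2-1/\theta$ appearing in Lemma~\ref{extendable} becomes $2-2/n$; extendability of $\overline{K}_n$ is therefore equivalent to the existence of some $H\in[\overline{K}_n]$ with $p(H)\leq 2-2/n$. The switching class is transparent: switching $\overline{K}_n$ with respect to a subset $U$ of its vertices flips exactly the edges between $U$ and its complement, so $[\overline{K}_n]=\{K_{a,n-a}\mid 0\leq a\leq n\}$.

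The key step is to compute $p(K_{a,n-a})$ for each $a$. By~\eqref{eq:block diag B^t}, for $t>0$ this reduces to asking when the matrix $M_t:=A(K_{a,n-a})+(n/2)\bI-(1/t)\bJ$ is positive semi-definite. For $1\leq a\leq n-1$, the two-dimensional subspace $W$ spanned by the characteristic vectors of the two color classes is invariant under both $A(K_{a,n-a})$ and $\bJ$, and on its orthogonal complement both operators vanish, so $M_t|_{W^\perp}=(n/2)\bI$ is positive definite. Expressing $M_t|_W$ in an orthonormal basis yields a symmetric $2\times 2$ matrix whose trace equals $n(t-1)/t$ and, after using $a+(n-a)=n$ to simplify, whose determinant equals $(n-2a)^2(t-2)/(4t)$. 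The edge case $a\in\{0,n\}$ corresponds to $\overline{K}_n$ itself, and a direct computation on $(n/2)\bI-(1/t)\bJ$ also gives $p(\overline{K}_n)=2$.

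The proposition now follows by splitting on parity. When $a\neq n/2$ the factor $(n-2a)^2$ is strictly positive, so the determinant is nonnegative if and only if $t\geq 2$; together with the trace this forces $p(K_{a,n-a})=2$. When $a=n/2$, which is only possible for $n$ even, the determinant vanishes identically and positive semi-definiteness reduces to trace $\geq 0$, i.e.\ $t\geq 1$, yielding $p(K_{n/2,n/2})=1$. Hence for even $n\geq 2$ we have $p(K_{n/2,n/2})=1\leq 2-2/n$, so $\overline{K}_n$ is extendable, while for odd $n$ every $H\in[\overline{K}_n]$ has $p(H)=2>2-2/n$, so $\overline{K}_n$ is not extendable. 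The only delicate part of the argument is the $2\times 2$ determinant computation on $W$; the parity dichotomy then falls out immediately from the fortuitous factorization $n^2-4a(n-a)=(n-2a)^2$.
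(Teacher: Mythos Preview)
Your proof is correct, and the even case is essentially the same as the paper's (both compute $p(K_{n/2,n/2})=1$ and invoke Lemma~\ref{extendable}/\ref{k-reg extendable}). The odd case, however, is handled quite differently. The paper does \emph{not} attempt to compute $p(H)$ for all $H\in[\overline{K}_n]$; instead it argues by contradiction via spectra: assuming a one-vertex extension $H$ exists, Lemma~\ref{lem:spec_ext} forces the characteristic polynomial of $S(H)$ to be $(x-(n-1))(x+1)^{n-2}(x^2+x-n)$, and reducing modulo~$2$ this contradicts the general fact (cited from~\cite{greaves2018}) that $\Psi_{S(H)}(x)\equiv (x+1)^{n+1}\pmod{2}$ for any Seidel matrix of order $n+1$.

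Your route is more elementary and entirely self-contained: it exploits the lucky circumstance that the switching class $[\overline{K}_n]$ consists exactly of the complete bipartite graphs $K_{a,n-a}$, for which $A$ and $\bJ$ are simultaneously block-diagonalizable, so $p(K_{a,n-a})$ can be read off from a $2\times 2$ matrix. The factorization $n^2-4a(n-a)=(n-2a)^2$ then makes the parity dichotomy transparent. The paper's argument, by contrast, does not need to enumerate the switching class at all and would apply verbatim to any graph with Seidel spectrum $\{n-1,[-1]^{n-1}\}$; its cost is the dependence on the external mod-$2$ lemma. Your approach would not generalize so easily to other two-eigenvalue graphs, but for this specific proposition it is cleaner.
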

\begin{proof}
	By direct calculation, we see that the Seidel spectrum of $\overline{K}_n$ is $\{n-1,[-1]^{n-1}\}$.
	First we assume that $n$ is even, and prove that $\overline{K}_n$ is extendable.
	Since $\overline{K}_{n}$ is switching equivalent to $K_{t,t}$ where $t := n/2$,
	it suffices to show that $K_{t,t}$ is extendable.
	Note that the smallest eigenvalue of $K_{t,t}$ equals $-t$.
	Since
	\[
		p(K_{t,t})  = \frac{2t}{t + t} \leq 2-\frac{1}{t},
	\]
	Lemma~\ref{k-reg extendable} implies that $K_{t,t}$ is extendable.
	
	Next we assume that $n$ is odd, and prove that $\overline{K}_{n}$ is strongly maximal.
	By way of contradiction, we assume that $\overline{K}_{n}$ is extendable.
	Namely, there exists a supergraph $H$ of $\overline{K}_{n}$ such that its order is $n+1$ and its largest Seidel eigenvalue is $n-1$.
	By Lemma~\ref{lem:spec_ext}, the characteristic polynomial $\Psi_{S(H)}(x)$ of $S(H)$ satisfies that
	\begin{align*}
		\Psi_{S(H)}(x)
		&=(x-(n-1))(x+1)^{n-2}(x^2+x-n) \\
		&\equiv x(x+1)^{n-2}(x^2+x+1) \mod 2\mathbb{Z}[x].
	\end{align*}
	However, by~\cite[Lemma~2.2]{greaves2018}, we have
	\[
		\Psi_{S(H)}(x) \equiv
		(x+1)^{n+1} \mod 2\mathbb{Z}[x].
	\]	
	They contradict, and the desired result is derived.
\end{proof}

\section*{Acknowledgements}
\indent

We greatly thank Professor Min Xu for supporting M.-Y. Cao to visit University of Science and Technology of China.

J.H. Koolen is partially supported by the National Natural Science Foundation of China (No. 12071454), Anhui Initiative in Quantum Information Technologies (No. AHY150000) and the project ``Analysis and Geometry on Bundles'' of Ministry of Science and Technology of the People's Republic of China.

A. Munemasa is partially supported by the JSPS KAKENHI grant (JP20K03537).

K.~Yoshino is supported by a scholarship from Tohoku University, Division for Interdisciplinary Advanced Research and Education.

\bibliographystyle{plain}
\bibliography{switchingclassesofgraphs}

\end{document}